\DeclareMathOperator{\ann}{ann}
\newtheorem{theorem}{Theorem}[section]
\newtheorem{proposition}[theorem]{Proposition}
\newtheorem{lemma}[theorem]{Lemma}
\newtheorem{corollary}[theorem]{Corollary}
\theoremstyle{definition}
\newtheorem{definition}[theorem]{Definition}
\newtheorem{example}[theorem]{Example}
\newtheorem{question}[theorem]{Question}
\begin{document}

\title[Categorial properties of compressed zero-divisor graphs]{Categorial properties of compressed zero-divisor graphs of finite commutative rings}

\author{Alen \DJ uri\' c}
\address[A. \DJ uri\' c]{Faculty of Natural Sciences and Mathematics, University of Banja Luka, Mladena Stojanovi\' ca 2, 78000 Banja Luka, Bosnia and Herzegovina}
\email{alen.djuric@protonmail.com}

\author{Sara Jev\dj eni\' c}
\address[S. Jev\dj eni\' c]{Faculty of Natural Sciences and Mathematics, University of Banja Luka, Mladena Stojanovi\' ca 2, 78000 Banja Luka, Bosnia and Herzegovina}
\email{sarajevdjenic9@gmail.com}

\author{Nik Stopar}
\address[N. Stopar]{Faculty of Electrical Engineering, University of Ljubljana, Tr\v za\v ska cesta 25, 1000 Ljubljana, Slovenia}
\email{nik.stopar@fe.uni-lj.si}

\thanks{This research was supported by the Slovenian Research Agency, project number BI-BA/16-17-025.}

\begin{abstract}
We define a compressed zero-divisor graph $\varTheta(K)$ of a finite commutative unital ring $K$, where the compression is performed by means of the associatedness relation. We prove that this is the best possible compression which induces a functor $\varTheta$, and that this functor preserves categorial products (in both directions). We use the structure of $\varTheta(K)$ to characterize important classes of finite commutative unital rings, such as local rings and principal ideal rings.

%We define a compressed zero-divisor graph $\varTheta(K)$ of a finite commutative unital ring $K$ and show that this extends to a product preserving functor. The compression in $\varTheta(K)$ is performed by means of the associatedness relation, which is a refinement of the relation used in the definition of the compressed zero-divisor graph $\Gamma_E(K)$ introduced by Mulay. We show that the structure of $\varTheta(K)$ can be used to characterize important classes of finite commutative rings, such as local rings and principal ideal rings.
\end{abstract}

\maketitle

{\footnotesize \emph{Key Words:} compressed zero-divisor graph, categorial product, local ring, principal ideal ring

\emph{2010 Mathematics Subject Classification:}
13M05, % Structure (Finite commutative rings)
05C25. % 	Graphs and abstract algebra (groups, rings, fields, etc.)

}

\section{Introduction}\label{intro}

The aim of this paper is to study the zero-divisor graphs of finite commutative rings with special attention devoted to categorial properties.
The zero-divisor graph of a commutative ring was first introduced by Beck \cite{Bec}, to investigate the structure of commutative rings.
For a given commutative ring $K$, Beck's zero-divisor graph $G(K)$ is a simple graph with vertex set $K$, such that two distinct vertices $a$ and $b$ are adjacent if and only if $ab=0$. Beck was mainly interested in the chromatic number and the clique number of the graph.
Later Anderson and Livingston \cite{And-Liv} defined a simplified version $\Gamma(K)$ of Beck's zero-divisor graph by including only nonzero zero-divisors of $K$ in the vertex set and leaving the definition of edges the same.
In particular, this graph is still a simple graph, but may have far fewer vertices in general. Their motivation for this simplification was to better capture the essence of the zero-divisor structure of the ring.
Several properties of $\Gamma(K)$ have been investigated, such as connectedness, diameter, girth, chromatic number, etc. \cite{And-Liv, Akb-Moh}.
In addition, the isomorphism problem for such graphs has been solved for finite reduced rings \cite{And-etal}.
Several authors have also investigated rings $K$ whose graph $\Gamma(K)$ belongs to a certain family of graphs, such as star graphs \cite{And-Liv}, complete graphs \cite{Akb-Moh}, complete $r$-partite graphs and planar graphs \cite{Akb-Mai-Yas,Smi}.
Similar type of zero-divisor graphs have been considered in other algebraic structures as well, namely, semirings and semigroups \cite{atani08,atani09,dem02,demdem,dem10,dem10rm,Obl}.

Although smaller, the graph $\Gamma(K)$ may still have very large set of vertices and edges.
To further reduce the size of the graph, Mulay \cite{Mul} introduced the graph of equivalence classes of zero-divisors $\Gamma_E(K)$, which was later called compressed zero-divisor graph by Anderson and LaGrange \cite{And-LaG}.
Two elements $a$ and $b$ of a commutative unital ring $K$ are equivalent if $\ann_K a=\ann_K b$.
The vertex set of $\Gamma_E(K)$ is the set of all equivalence classes of nonzero zero-divisors of $K$ and two distinct equivalence classes $[r]$ and $[s]$ are adjacent if and only if $rs=0$.
Compressed zero-divisor graphs were investigated in more details by Spiroff and Wickham \cite{Spi-Wic}, Coykendall, Sather-Wagstaff, Sheppardson and Spiroff \cite{Coy-etal} and Anderson and LaGrange \cite{And-LaG, And-LaG-2}. They considered similar graph properties that were previously considered for the zero-divisor graph.
The main advantage of the compressed zero-divisor graph $\Gamma_E(K)$ over the noncompressed graph $\Gamma(K)$ is that it can be relatively small even if the ring itself is large.
In particular, $\Gamma_E(K)$ can be a finite graph even if $K$ is an infinite ring and $\Gamma(K)$ an infinite graph. Nevertheless, graph $\Gamma_E(K)$ still captures the essence of the zero-divisor structure of the ring, since the elements that are identified by the above equivalence have the same neighbourhood in $\Gamma(K)$.

In this paper we will be dealing with a type of compressed zero-divisor graph of finite commutative unital rings. Our main focus will be to investigate the categorial properties of such graphs.
Our compressed zero-divisor graph, denoted by $\varTheta(K)$ (see Definition~\ref{def:theta}), is essentially a compression of Beck's original zero-divisor graph $G(K)$, except that we allow loops in the graph.
Unlike in the definition of $\Gamma_E(K)$, here, the compression is performed by means of the associatedness relation (recall that $a,b \in K$ are associated if $a=bu$ for some invertible element $u \in K$).
We remark that the associatedness relation is a refinement of the relation used in the definition of $\Gamma_E(K)$, hence, the graph $\Gamma_E(K)$ can easily be obtained from $\varTheta(K)$ by simply identifying the vertices of $\varTheta(K)$ with the same neighbourhood, and eliminating those vertices that do not correspond to zero-divisors.

The advantage of $\varTheta(K)$ over $\Gamma_E(K)$ is that it can be extended in a natural way to a functor from the category of commutative rings to the category of graphs.
The main reason why $\Gamma_E(K)$ does not extend to a functor in a natural way is that the corresponding equivalence relation induced by annihilator ideals is too coarse, it compresses the zero-divisor graph too much.
In fact, we show that in the class of finite unital rings the associatedness relation is the coarsest equivalence relation that still induces a functor $\varTheta$ (see Propositions~\ref{prop:best} and \ref{prop:functor} for details). Graph $\varTheta(K)$ is thus the best possible candidate for a categorial approach to compressed zero-divisor graphs of finite rings.

It turns out that functor $\varTheta$ has several favourable properties that connect the ring structure of $K$ and the graph structure of $\varTheta(K)$. In particular, it preserves categorial products, not only in the forward direction but, in some sense, also in the backward direction - a decomposition of graph $\varTheta(K)$ induces a decomposition of ring $K$ (see Theorem~\ref{prop:preproduct}). In the class of finite commutative rings this reduces the problem to local rings.
In addition, our main results, Theorems~\ref{prop:local--iff} and \ref{thm:pir-iff}, show that the structure of $\varTheta(K)$ can be used to characterize important families of rings within the category of finite commutative unital rings, namely, local rings and principal ideal rings.

\section{Preliminaries} 

Throughout the paper $K$ will be a finite commutative unital ring with unity $1$, unless specified otherwise. In particular, we consider the zero ring to be unital. We denote by $\sim$ the \emph{associatedness} relation on the set of elements of $K$. By definition $a\sim b$ if and only if $a=bu$ for some invertible element $u \in K$.
The associatedness class of an element $a \in K$, i.e. the equivalence class of $a$ with respect to $\sim$, will be denoted by $[a]$. The equivalence class with respect to any other equivalence relation $\approx$ will be denoted by $[\phantom{a}]_\approx$.
Recall that in a finite commutative unital ring $K$ every element is either a unit or a zero-divisor. Indeed, if $a \in K$ is not a zero-divisor, then the map $x \mapsto ax$ is injective and hence surjective, which means that $a$ is invertible.
The ring of integers modulo $m$ will be denoted by $\mathbb{Z}_m$. 

Let $G$ be an arbitrary, possibly non-simple, graph and $v$ a vertex in $G$. The \emph{neighbourhood} of $v$, i.e. the set of all vertices adjacent to $v$ (including possibly $v$), will be denoted by $N(v)$.
The graphs we will be dealing with will have no multiple edges and no multiple loops.
We will adopt the convention that a loop on vertex $v$ contributes $1$ to the degree of $v$, denoted $\deg (v)$. With this convention our graphs will satisfy $\deg (v)=|N(v)|$.

\section{Definition and categorial properties of $\varTheta(K)$}

It is easily verified that finite commutative unital rings form
a category with arrows being ring homomorphisms that preserve the identity element.
We will denote this category by $\mathbf{FinCRing}$. The category of undirected graphs and graph morphisms will be denoted by $\mathbf{Graph}$. Given a category $\mathbf{C}$, we will denote the class of objects of $\mathbf{C}$ by $\mathrm{obj}\mathbf{C}$. For $X,Y \in \mathrm{obj}\mathbf{C}$, the set of morphisms from $X$ to $Y$ will be denoted by $\mathbf{C}(X,Y)$.

In this paper we will take a categorial approach to zero-divisor graphs. We will focus on compressed zero-divisor graphs since these are usually much smaller then the standard zero-divisor graphs.
As mentioned in the introduction, Mulay's compressed zero-divisor graph $\Gamma_E(K)$ is not a good candidate for a categorial approach, because the formation of $\Gamma_E(K)$ does not extend to a functor $\mathbf{FinCRing} \to \mathbf{Graph}$ in a natural way. The problem is that graph $\Gamma_E(K)$ is compressed too much. Hence, our definition of zero-divisor graph will be different.
We want to compress the zero-divisor graph as much as possible, in such a way, that it will still induce a functor. The following proposition (along with Proposition~\ref{prop:functor}) essentially states that the associatedness relation is the best equivalence relation to do this.

\begin{proposition}\label{prop:best}
For each $K\in\mathrm{obj}\mathbf{FinCRing}$, let $\approx_{K}$
be an equivalence relation on $K$, such that the family $\left\{ \approx_{K}\right\} _{K\in\mathrm{obj}\mathbf{FinCRing}}$
induces a well defined functor $F:\mathbf{FinCRing}\to\mathbf{Graph}$
in the following way.
\begin{enumerate}
\item\label{enu:compression} For $K\in\mathrm{obj}\mathbf{FinCRing}$, the
vertices of $F\left(K\right)$ are equivalence classes of $\approx_{K}$, and
there is an edge between vertices $\left[a\right]_{\approx_{K}}$
and $\left[b\right]_{\approx_{K}}$ if and only if $ab=0$.
\item\label{enu:arrow} For $f\in\mathbf{FinCRing}$$\left(K,L\right)$,
we have $F\left(f\right)\left(\left[a\right]_{\approx_{K}}\right)=\left[f\left(a\right)\right]_{\approx_{L}}$.
\end{enumerate}
Then, for every $K\in\mathrm{obj}\mathbf{FinCRing}$, $a\approx_{K}b$
implies $a\sim b$.
\end{proposition}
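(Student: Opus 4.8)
The plan is to separate the two well-definedness requirements contained in the hypothesis that $F$ is a \emph{well defined} functor. On the one hand, each $F(L)$ must be a genuine object of $\mathbf{Graph}$, so the adjacency rule of condition~\ref{enu:compression} may not depend on the chosen representatives; replacing a representative $a$ of a vertex by some $a'\approx_L a$ must not change its adjacency to any other vertex, i.e.\ $a\approx_L a'$ forces $ab=0\iff a'b=0$ for all $b\in L$, which is to say $\ann_L a=\ann_L a'$. Hence every $\approx_L$ refines the annihilator relation. Taking one of the two elements to be $0$ yields the decisive special case: if $0\approx_L c$, then $\ann_L c=\ann_L 0=L$, so $c=c\cdot 1=0$; thus the class of $0$ is a singleton in $F(L)$, for every $L$. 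On the other hand, for each $f\in\mathbf{FinCRing}(K,L)$ the vertex assignment of condition~\ref{enu:arrow} must be well defined, which is precisely the implication $a\approx_K b\Rightarrow f(a)\approx_L f(b)$.

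Now fix $K$ and suppose $a\approx_K b$. First I would push this along the quotient homomorphism $\pi\colon K\to K/aK$: by condition~\ref{enu:arrow} we get $\pi(a)\approx_{K/aK}\pi(b)$, and since $\pi(a)=0$ the singleton observation above forces $\pi(b)=0$, i.e.\ $b\in aK$. The symmetric argument with $K\to K/bK$ gives $a\in bK$. Therefore $aK=bK$: the two elements generate the same principal ideal.

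It remains to upgrade $aK=bK$ to $a\sim b$, and this is where I expect the real work to lie. Writing $a=bs$ and $b=at$, one gets $a(1-ts)=0$, and the goal is to show that $ts$ is a unit, for then so is $s$ and $a=bs$ gives $a\sim b$. To prove $ts$ is a unit I would reduce to the local case using the decomposition $K\cong\prod_i K_i$ into finite local rings, noting that both $aK=bK$ and $a\sim b$ hold coordinatewise: in a finite local ring $(R,\mathfrak m)$, if $a\neq 0$ then $1-ts$ cannot be a unit (otherwise $a=0$), so $1-ts\in\mathfrak m$ and hence $ts=1-(1-ts)\notin\mathfrak m$ is a unit. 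The main obstacle is exactly this implication $aK=bK\Rightarrow a\sim b$: it fails for general commutative rings and genuinely relies on finiteness, through the local decomposition and the unit/zero-divisor dichotomy it provides. Everything preceding it is a formal consequence of the two well-definedness conditions.
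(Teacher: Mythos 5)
Your proposal is correct and follows essentially the same route as the paper: show the class of $0$ is a singleton via well-definedness of adjacency, push $a\approx_K b$ along the quotient $K\to K/aK$ (and symmetrically) to get $aK=bK$, and then upgrade this to $a\sim b$. The only difference is at the last step, where the paper simply cites Kaplansky's observation that $aK=bK$ implies $a\sim b$ in artinian rings, whereas you supply the standard self-contained proof via the decomposition into finite local rings --- a correct and welcome expansion of the citation.
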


\begin{proof}
Suppose $x\approx_{K}0$. By \ref{enu:compression}, there is an edge joining
$\left[1\right]_{\approx_{K}}$ and $\left[0\right]_{\approx_{K}}=\left[x\right]_{\approx_{K}}$.
Since edges have to be well defined, we deduce $1\cdot x=0$.
This shows that $\left[0\right]_{\approx_{K}}=\left\{ 0\right\} $
for any $K$.

Now, suppose $a\approx_{K}b$ and let $q:K \to K/aK$ be the canonical
projection. Then, by \ref{enu:arrow}, 
\[
\left[q\left(b\right)\right]_{\approx_{K/aK}}=F\left(q\right)\left(\left[b\right]_{\approx_{K}}\right)=F\left(q\right)\left(\left[a\right]_{\approx_{K}}\right)=\left[q\left(a\right)\right]_{\approx_{K/aK}}=\left[0\right]_{\approx_{K/aK}}.
\]
Thus, the above implies $q\left(b\right)=0$, hence $b\in aK$. Similarly,
$a\in bK$, so $aK=bK$. As remarked by Kaplansky in \cite[\S 2]{Kap}, in any artinian ring and, in particular, in any finite ring, this implies $a\sim b$.
\end{proof}

The above result thus motivates us to define compressed zero-divisor graphs in the
following way.

\begin{definition}\label{def:theta}
For a finite commutative unital ring $K$, $\varTheta\left(K\right)$
is a graph whose vertices are associatedness classes (including $\left[0\right]$
and $\left[1\right]$) of elements of $K$ and vertices $\left[u\right]$ and $\left[v\right]$ (not necessarily
distinct) are adjacent if and only if $uv=0$.
\end{definition}

Observe that the edges of graph $\varTheta\left(K\right)$ are well-defined.
In addition, the associatedness classes form a monoid under the well-defined multiplication $[x]\cdot[y]=[xy]$.

We remark that class $[0]$ contains only $0$ and class $[1]$ consists of all the units of the ring. We need to keep these two classes in the graph and also allow loops because we need them in order to obtain a functor. Every other class is represented by a nonzero zero-divisor, because in a finite ring every element is either a zero-divisor or a unit.

\begin{proposition}\label{prop:functor}
The mapping $K\mapsto\varTheta\left(K\right)$ extends to
a functor $\varTheta:\mathbf{FinCRing}\to\mathbf{Graph}$.
\end{proposition}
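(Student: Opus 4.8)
The plan is to define $\varTheta$ on morphisms by the only formula compatible with condition~\ref{enu:arrow} of Proposition~\ref{prop:best}: for a morphism $f\in\mathbf{FinCRing}(K,L)$, set $\varTheta(f)([a])=[f(a)]$ for every $a\in K$. I would then verify the three things needed to upgrade this assignment to a functor: that $\varTheta(f)$ is a well-defined map on associatedness classes, that it is a morphism in $\mathbf{Graph}$, and that $\varTheta$ respects identities and composition. The object part is already supplied by Definition~\ref{def:theta}, so no work is required there.

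First I would establish well-definedness, which I expect to be the only step that carries any content. Suppose $[a]=[b]$, so $a=bu$ for some unit $u\in K$. Since $f$ preserves the identity, $f(u)f(u^{-1})=f(1)=1$, so $f(u)$ is a unit in $L$; then $f(a)=f(b)f(u)$ gives $f(a)\sim f(b)$, whence $[f(a)]=[f(b)]$. This is precisely where it matters that the compression is by the associatedness relation and that our morphisms are unital: both ingredients are what make the image of a class independent of the chosen representative, so this is the natural counterpart to the obstruction that prevented $\Gamma_E(K)$ from being functorial.

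Next I would check that $\varTheta(f)$ is a graph morphism, taking care to include loops. If $[a]$ and $[b]$ are adjacent in $\varTheta(K)$, then $ab=0$, so $f(a)f(b)=f(ab)=f(0)=0$, meaning $[f(a)]$ and $[f(b)]$ are adjacent in $\varTheta(L)$; specializing to $a=b$ shows that a loop on $[a]$ (i.e.\ $a^2=0$) maps to a loop on $[f(a)]$. Hence $\varTheta(f)\in\mathbf{Graph}(\varTheta(K),\varTheta(L))$.

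Finally, functoriality is a direct verification on representatives. We have $\varTheta(\mathrm{id}_K)([a])=[\mathrm{id}_K(a)]=[a]$, so $\varTheta(\mathrm{id}_K)=\mathrm{id}_{\varTheta(K)}$; and for composable morphisms $f\colon K\to L$ and $g\colon L\to M$ one computes $\varTheta(g\circ f)([a])=[g(f(a))]=\varTheta(g)([f(a)])=\varTheta(g)(\varTheta(f)([a]))$, giving $\varTheta(g\circ f)=\varTheta(g)\circ\varTheta(f)$. The only genuine obstacle is the well-definedness step; everything else is immediate from $f$ being a unital ring homomorphism.
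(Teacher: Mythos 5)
Your proposal is correct and follows essentially the same route as the paper's proof: define $\varTheta(f)([a])=[f(a)]$, note well-definedness from the fact that a unital homomorphism sends units to units, and check adjacency-preservation and functoriality directly. You simply spell out the well-definedness and loop-preservation steps that the paper leaves as "observe" and "clearly," which is fine.
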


\begin{proof}
Let $f:K \to L$ be a unital ring homomorphism, where $K$
and $L$ are finite commutative unital rings. Define $\varTheta\left(f\right):\varTheta\left(K\right)\to\varTheta\left(L\right)$
by $\varTheta\left(f\right)\left(\left[x\right]\right)=\left[f\left(x\right)\right]$.
Observe that $\varTheta\left(f\right)$ is well-defined since $f$
preserves units, and clearly, $\varTheta\left(f\right)$ is a graph
homomorphism. In addition, $\varTheta\left(\textrm{id}_{K}\right)=\textrm{id}_{\varTheta\left(K\right)}$
and $\varTheta\left(g\circ f\right)=\varTheta\left(g\right)\circ\varTheta\left(f\right)$
for all morphisms $f:K \to L$ and $g:L \to M$. So $\varTheta:\mathbf{FinCRing}\to\mathbf{Graph}$
is a functor.
\end{proof}

Observe that both categories involved have all finite products.
Binary product in the category $\mathbf{FinCRing}$ is the direct product of rings, while binary product in category $\mathbf{Graph}$ is the tensor product of graphs (also called categorical product or Kronecker product). Recall that for graphs $G$ and $H$, their tensor product $G\times H$ is defined
as follows. The set of vertices of $G\times H$ is the Cartesian product
$V\left(G\right)\times V\left(H\right)$ and a vertex $\left(g,h\right)$
is adjacent to a vertex $\left(g',h'\right)$ if and only if both
$g$ is adjacent to $g'$ and $h$ is adjacent to $h'$. Final object in in the
category $\mathbf{FinCRing}$ is the zero ring $0$ and final
object in the category $\mathbf{Graph}$ is the graph with precisely
one vertex and one loop.

\begin{proposition}\label{prop:preserves--products}
The functor $\varTheta:\mathbf{FinCRing}\to\mathbf{Graph}$
preserves finite products.
\end{proposition}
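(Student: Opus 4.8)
The plan is to show that $\varTheta$ preserves finite products by verifying two things: that it sends the final object to the final object, and that it sends binary products to binary products. Since every finite product can be built from these, this suffices. The final-object part is a quick check: $\varTheta(0)$ has a single vertex $[0]=[1]$ (in the zero ring $0=1$), and since $0\cdot 0=0$ this vertex carries a loop, so $\varTheta(0)$ is exactly the final object of $\mathbf{Graph}$.

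For the binary case, I would fix finite commutative unital rings $K$ and $L$ and exhibit a graph isomorphism $\varTheta(K\times L)\cong\varTheta(K)\times\varTheta(L)$ that is natural, i.e. compatible with the canonical projections. The key observation is that an element $u\in K\times L$ is invertible if and only if it is invertible in each coordinate, so $(a,b)\sim(a',b')$ in $K\times L$ if and only if $a\sim a'$ in $K$ and $b\sim b'$ in $L$. This means the associatedness classes of $K\times L$ are precisely the pairs $([a],[b])$, giving a bijection on vertex sets $[(a,b)]\mapsto([a],[b])$ between $V(\varTheta(K\times L))$ and $V(\varTheta(K))\times V(\varTheta(L))$.

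Next I would check that this bijection respects adjacency in both directions. Two classes $[(a,b)]$ and $[(a',b')]$ are adjacent in $\varTheta(K\times L)$ exactly when $(a,b)(a',b')=(aa',bb')=(0,0)$, that is, when $aa'=0$ \emph{and} $bb'=0$; by Definition~\ref{def:theta} this is precisely the condition that $[a]$ is adjacent to $[a']$ in $\varTheta(K)$ and $[b]$ is adjacent to $[b']$ in $\varTheta(L)$, which by the definition of the tensor product is exactly adjacency of $([a],[b])$ and $([a'],[b'])$ in $\varTheta(K)\times\varTheta(L)$. Thus the bijection is a graph isomorphism. Finally, to conclude that $\varTheta(K\times L)$ really \emph{is} the categorial product and not merely isomorphic to it, I would verify that this isomorphism identifies $\varTheta$ applied to the ring projections $K\times L\to K$ and $K\times L\to L$ with the graph-theoretic product projections, which follows immediately from the formula $\varTheta(f)([x])=[f(x)]$.

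I expect the only genuine subtlety, the main obstacle, to be the equivalence $(a,b)\sim(a',b')\iff a\sim a'\text{ and }b\sim b'$, since one must use that units of a product ring are exactly coordinatewise units; everything else is a routine matching of definitions. Once that lemma on the vertex bijection is in place, the adjacency check is purely formal because both the ring product and the tensor product impose their defining conditions coordinatewise.
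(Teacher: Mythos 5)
Your proposal is correct and follows essentially the same route as the paper: reduce to the final object and binary products, observe that units of $K\times L$ are coordinatewise units so associatedness classes split as pairs, and match the coordinatewise adjacency condition with the tensor product. Your extra remark about compatibility with the canonical projections is a small bonus of rigour the paper leaves implicit, but otherwise the arguments coincide.
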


\begin{proof}
It is sufficient to show that functor $\varTheta$ preserves binary products and
final object. Let $K,L\in\mathrm{obj}\mathbf{FinCRing}$. Since operations
in $K\times L$ are defined coordinate-wise, we have that $\left[\left(x,y\right)\right]=\left[\left(x',y'\right)\right]$
in $K\times L$ if and only if both $\left[x\right]=\left[x'\right]$
and $\left[y\right]=\left[y'\right]$. This shows that $V\left(\varTheta\left(K\times L\right)\right)=V\left(\varTheta\left(K\right)\right)\times V\left(\varTheta\left(L\right)\right)$.
In addition, $\left[\left(x,y\right)\right]$ is adjacent to $\left[\left(z,w\right)\right]$
in $\varTheta\left(K\times L\right)$ if and only if both $\left[x\right]$
is adjacent to $\left[z\right]$ in $\varTheta\left(K\right)$ and
$\left[y\right]$ is adjacent to $\left[w\right]$ in $\varTheta\left(L\right)$.
Hence, $\varTheta\left(K\times L\right)$ is isomorphic to the tensor
product of graphs $\varTheta\left(K\right)$ and $\varTheta\left(L\right)$
by the map $\left[\left(x,y\right)\right]\mapsto\left(\left[x\right],\left[y\right]\right)$.
Clearly, $\varTheta\left(0\right)$, the graph of the zero ring, is
the graph with precisely one vertex and one loop.
\end{proof}

As the following example shows, functor $\varTheta:\mathbf{FinCRing}\to\mathbf{Graph}$
does not preserve finite limits in general, hence it has no left adjoint
functor.

\begin{example}
Let $K=\mathbb{Z}_{2}\left[x\right]/\left(x^{3}\right)$ and let $f:K \to K$
be a unital ring homomorphism which maps $x$ to $x+x^{2}$. It is
easily verifed that the equalizer of $\textrm{id}_{K}$ and $f$ (i.e. the limit cone over the diagram $K \mathrel{\substack{\xrightarrow[\phantom{\textrm{id}_K}]{f}\\[-3.3ex]\xrightarrow[\textrm{id}_K]{\phantom{f}}}} K$), is
$E\xrightarrow{i}K$, where $E=\left\{ 0,1,x^{2},1+x^{2}\right\} $
and $i$ is an inclusion. Observe that 
\[
V\left(\varTheta\left(E\right)\right)=\left\{ \left\{ 0\right\} ,\left\{ x^{2}\right\} ,\left\{ 1,1+x^{2}\right\} \right\} .
\]
On the other hand, associatedness classes in $K$ form the set 
\[
V\left(\varTheta\left(K\right)\right)=\left\{ \left\{ 0\right\} ,\left\{ 1,1+x,1+x^{2},1+x+x^{2}\right\} ,\left\{ x,x+x^{2}\right\} ,\left\{ x^{2}\right\} \right\} .
\]
We see that $\varTheta\left(f\right)=\varTheta\left(\mathop{id}_{K}\right)=\textrm{id}_{\varTheta\left(K\right)}$,
so the equalizer of $\varTheta\left(\textrm{id}_{K}\right)$
and $\varTheta\left(f\right)$ is $\varTheta\left(K\right)\xrightarrow{\textrm{id}_{\varTheta\left(K\right)}}\varTheta\left(K\right)$.
Since
$\left|V\left(\varTheta\left(E\right)\right)\right|\neq\left|V\left(\varTheta\left(K\right)\right)\right|$,
we conclude that equalizer of $\textrm{id}_{K}$ and $f$ is not preserved
by functor $\varTheta:\mathbf{FinCRing}\to\mathbf{Graph}$.
\end{example}

The following theorem shows that, in a sense, the product is preserved in the reverse direction as well.

\begin{theorem}\label{prop:preproduct}
Suppose $K,L_{1},L_{2}\in\mathrm{obj}\mathbf{FinCRing}$
such that $\varTheta\left(K\right)\cong\varTheta\left(L_{1}\right)\times\varTheta\left(L_{2}\right)$.
Then $K=K_{1}\times K_{2}$ for some subrings $K_{1},K_{2}\subseteq K$
with $\varTheta\left(K_{1}\right)\cong\varTheta\left(L_{1}\right)$
and $\varTheta\left(K_{2}\right)\cong\varTheta\left(L_{2}\right)$.
\end{theorem}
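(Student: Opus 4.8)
The plan is to locate, inside $\varTheta(K)$, the two vertices that ought to be the classes of the coordinate idempotents, and then to manufacture honest idempotents of $K$ out of them. Fix a graph isomorphism $\phi\colon \varTheta(L_1)\times\varTheta(L_2)\to\varTheta(K)$; I may assume both $L_1,L_2$ are nonzero, the degenerate case being immediate since $\varTheta(0)$ is the tensor unit. The vertex $\mathbf 0=([0]_{L_1},[0]_{L_2})$ is the unique universal vertex of the tensor product, so $\phi(\mathbf 0)=[0]$. Put $\mathbf e=([1]_{L_1},[0]_{L_2})$ and $\mathbf f=([0]_{L_1},[1]_{L_2})$ and let $[a]=\phi(\mathbf e)$, $[b]=\phi(\mathbf f)$. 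Since $\mathbf e$ and $\mathbf f$ are adjacent, $ab=0$. From $N(\mathbf e)=\{[0]_{L_1}\}\times V(\varTheta(L_2))$ one sees that $\phi$ carries the subgraph induced on this set isomorphically onto the subgraph induced on $N([a])$, i.e. onto the classes of $\ann_K a$; and because the first coordinate is the looped vertex $[0]_{L_1}$, that induced subgraph is a faithful copy of $\varTheta(L_2)$, loops included. Symmetrically, the subgraph induced on $N([b])$, the classes of $\ann_K b$, is a copy of $\varTheta(L_1)$, and $N(\mathbf e)\cap N(\mathbf f)=\{\mathbf 0\}$ gives $\ann_K a\cap\ann_K b=0$.

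Next I would extract an idempotent. Under the identification above, $[b]=\phi(\mathbf f)$ is exactly the unit class $[1]_{L_2}$ of the copy of $\varTheta(L_2)$ sitting on $N([a])$, so its only neighbour inside that induced subgraph is $[0]$. Translated back, this says: for $y\in\ann_K a$, the equation $by=0$ forces $y=0$, i.e. multiplication by $b$ is injective on the finite ideal $\ann_K a$. An injective self-map of a finite set is a bijection of finite order, so $b^{N}$ acts as the identity on $\ann_K a$ for some $N\ge 1$; setting $e:=b^{N}$, which lies in $\ann_K a$ (an ideal containing $b$) and acts as the identity on it, $e$ is idempotent, and $\ann_K a=e\,\ann_K a\subseteq eK\subseteq\ann_K a$ yields $\ann_K a=eK$. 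Running the symmetric argument with $a$ in place of $b$ produces an idempotent $e'=a^{M}$ with $\ann_K b=e'K$.

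Finally I would assemble the decomposition. From $eK\cap e'K\subseteq\ann_K a\cap\ann_K b=0$ we get $ee'=0$, so $e,e'$ are orthogonal idempotents; putting $e''=1-e-e'$, also idempotent and orthogonal to both, gives $K=eK\times e'K\times e''K$. Applying Proposition~\ref{prop:preserves--products} and using $\varTheta(eK)\cong\varTheta(L_2)$ and $\varTheta(e'K)\cong\varTheta(L_1)$ from the first paragraph, a comparison of the number of vertices with $\varTheta(K)\cong\varTheta(L_1)\times\varTheta(L_2)$ forces $|V(\varTheta(e''K))|=1$; but a ring with a single associatedness class is the zero ring, so $e''=0$ and $1=e+e'$. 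Hence $K=e'K\times eK$ is the desired decomposition, with $K_1=e'K$, $K_2=eK$, and $\varTheta(K_1)\cong\varTheta(L_1)$, $\varTheta(K_2)\cong\varTheta(L_2)$.

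The main obstacle is the idempotent extraction: turning the graph-theoretic statement that $[b]$ is the unit vertex of the $\varTheta(L_2)$-copy carried by $N([a])$ into the ring-theoretic fact that $\ann_K a$ is generated by an idempotent. The delicate bookkeeping there is verifying that the subgraph induced on $N([a])$ genuinely is $\varTheta(L_2)$ — that $K$-associatedness restricted to the ideal $eK$ agrees with associatedness inside the factor ring, and that loops are respected. By contrast, the closing vertex-count cleanly rules out a spurious third factor and so sidesteps any appeal to uniqueness of tensor factorization of graphs.
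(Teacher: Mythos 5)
Your proof is correct, and the decisive middle step is genuinely different from the paper's. Both arguments start identically: locate $[a]=\phi(([1],[0]))$ and $[b]=\phi(([0],[1]))$, take $K_1=\ann(b)$, $K_2=\ann(a)$, and get $K_1\cap K_2=0$ from $N([a])\cap N([b])=\{[0]\}$. The paper then stays graph-theoretic: it shows $[k_1^2]=[k_1]$ by exploiting that the induced subgraph on $N([k_2])$, being a copy of $\varTheta(L_1)$, has a \emph{unique} vertex whose only neighbour is $[0]$, extracts a unit $u_1$ with $k_1=k_1^2u_1$, and then runs a second argument of the same kind to show $[1-k_1u_1]=[k_2]$, which yields $1=k_1u_1+k_2u_2$ and hence $K=K_1+K_2$ directly. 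You instead manufacture explicit idempotent generators: injectivity of multiplication by $b$ on the finite ideal $\ann(a)$ (which is exactly the graph statement that $[0]$ is the only neighbour of $[b]$ inside $N([a])$) gives $b^N$ acting as the identity there, so $e=b^N$ is idempotent with $eK=\ann(a)$, and symmetrically for $e'=a^M$; you then eliminate the third Peirce factor $(1-e-e')K$ by comparing vertex counts via Proposition~\ref{prop:preserves--products}. Your route is shorter at the key step, makes the idempotents explicit, and avoids the paper's second uniqueness argument, at the cost of leaning harder on finiteness (the permutation-order trick) and on the multiplicativity of $|V(\varTheta(\cdot))|$ under products. The one point you rightly flag --- that the induced subgraph on the classes of $eK$ is $\varTheta(eK)$, i.e.\ that $K$-associatedness restricted to $eK$ agrees with associatedness in the ring $eK$ --- does hold, since units of $K=eK\times(1-e)K$ restrict componentwise to units of $eK$; the paper needs and states the same observation in its closing lines.
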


\begin{proof}
If $\varTheta\left(L_{1}\right)\cong\varTheta\left(0\right)$,
then $\varTheta\left(L_{1}\right)\times\varTheta\left(L_{2}\right)\cong\varTheta\left(L_{2}\right)$
so we may take $K_{1}=0$ and $K_{2}=K$. We argue similarly if $\varTheta\left(L_{2}\right)\cong\varTheta\left(0\right)$.
So assume $\varTheta\left(L_{1}\right)\ncong\varTheta\left(0\right)$
and $\varTheta\left(L_{2}\right)\ncong\varTheta\left(0\right)$.

Let $f:\varTheta\left(L_{1}\right)\times\varTheta\left(L_{2}\right)\to\varTheta\left(K\right)$
be any isomorphism. Choose $k_{1},k_{2}\in K$ such that $f\left(\left(\left[1\right],\left[0\right]\right)\right)=\left[k_{1}\right]$
and $f\left(\left(\left[0\right],\left[1\right]\right)\right)=\left[k_{2}\right]$, and define
\begin{equation}
K_{1}=\ann\left(k_{2}\right) \quad\textup{and}\quad K_{2}=\ann\left(k_{1}\right).\label{eq:24comm}
\end{equation}
Clearly, $K_{1}$ and $K_{2}$ are ideals of $K$. If $x\in K_{1}\cap K_{2}$,
then 
\begin{align*}
\left[x\right]\in N\left(\left[k_{1}\right]\right)\cap N\left(\left[k_{2}\right]\right)
&=f\left(N\left(\left(\left[1\right],\left[0\right]\right)\right)\cap N\left(\left(\left[0\right],\left[1\right]\right)\right)\right)=\\
&=f\left(\left\{ \left(\left[0\right],\left[0\right]\right)\right\} \right)=\{[0]\}.
\end{align*}
Thus, $K_{1}\cap K_{2}=0$.

Note that the subgraph of $\varTheta\left(L_{1}\right)\times\varTheta\left(L_{2}\right)$,
induced by $N\left(\left(\left[0\right],\left[1\right]\right)\right)$,
is isomorphic to $\varTheta\left(L_{1}\right)$. Hence, the
subgraph $G_{1}$ of $\varTheta\left(K\right)$, induced by $N\left(\left[k_{2}\right]\right)$,
is also isomorphic to $\varTheta\left(L_{1}\right)$. Clearly,
$V\left(G_{1}\right)=\left\{ \left[x\right]\in V\left(\varTheta\left(K\right)\right):x\in K_{1}\right\} $
and $k_{1}\in K_{1}$. Since $K_{1}$ is an ideal, we thus have $\left[k_{1}^{2}\right]\in V\left(G_{1}\right)$. Observe that $[k_1^2] \neq [0]$, since $[k_1]$ has no loop due to the fact that $L_1 \neq 0$.
Suppose $\left[k_{1}^{2}\right]\neq\left[k_{1}\right]$. Then $\left[k_{1}^{2}\right]\in V\left(G_{1}\right)\setminus\left\{ \left[0\right],\left[k_{1}\right]\right\} $.
Since $G_{1}\cong\varTheta\left(L_{1}\right)$, there is only one vertex in $G_1$
that is adjacent to every vertex in $G_1$, i.e. $\left[0\right]$, and there is only one vertex in $G_{1}$
whose only neighbour in $G_{1}$ is $\left[0\right]$, i.e. $f\left(\left(\left[1\right],\left[0\right]\right)\right)=\left[k_{1}\right]$.
This implies that $\left[k_{1}^{2}\right]$ has a neighbour in
$G_{1}$ different from $\left[0\right]$, say $\left[a\right]$,
where $a\in K_{1}$. Hence, $k_{1}^{2}a=0$ because $G_{1}$ is an
induced subgraph of $\varTheta\left(K\right)$. This imples that $\left[k_{1}a\right]$
is adjacent to $\left[k_{1}\right]$ in $\varTheta\left(K\right)$,
and since $K_{1}$ is an ideal, $\left[k_{1}a\right]\in V\left(G_{1}\right)$.
Therefore, $k_{1}a=0$ because, by the above, [0] is the only neighbour of $[k_1]$ in $G_1$. Similarly, this implies that
$\left[a\right]$ is adjacent to $\left[k_{1}\right]$, hence
$a=0$, a contradiction. We have thus shown that $\left[k_{1}^{2}\right]=\left[k_{1}\right]$.
In particular, $k_{1}=k_{1}^{2}u_{1}$ for some unit $u_{1}\in K$.

Observe that $k_{1}\left(1-k_{1}u_{1}\right)=0$, hence $1-k_{1}u_{1}\in K_{2}$
by (\ref{eq:24comm}). If $1-k_{1}u_{1}=0$, then $k_{1}$ is a unit in
$K$, hence $\left[k_{1}\right]=\left[1\right]$. But this would imply
that $\left[0\right]$ is the only neighbour of $\left[k_{1}\right]$
in $\varTheta\left(K\right)$, which would further imply
$K_{2}=0$. In this case, $\varTheta\left(L_{2}\right)\cong\varTheta\left(0\right)$,
a contradiction. So $1-k_{1}u_{1}\neq0$.

Suppose $\left[1-k_{1}u_{1}\right]\neq\left[k_{2}\right]$. Let $G_{2}$
be the subgraph of $\varTheta\left(K\right)$, induced by $N\left(\left[k_{1}\right]\right)$.
Then the same argument as above shows that $\left[1-k_{1}u_{1}\right]\in V\left(G_{2}\right)\setminus\left\{ \left[0\right],\left[k_{2}\right]\right\} $
has a neighbour in $G_{2}$ different from $\left[0\right]$,
say $\left[b\right]$, where $0 \neq b\in K_{2}$. Hence, 
\begin{equation}
\left(1-k_{1}u_{1}\right)b=0\label{eq:24-1comm}
\end{equation}
 because $G_{2}$ is an induced subgraph of $\varTheta\left(K\right)$.
Since $k_{2}k_{1}=0$, we have $k_{2}=k_{2}\left(1-k_{1}u_{1}\right)$.
Hence, $k_{2}b=0$ by (\ref{eq:24-1comm}). This implies $b\in K_{1}$,
so $b\in K_{1}\cap K_{2}=0$, a contradiction. Thus, $\left[1-k_{1}u_{1}\right]=\left[k_{2}\right]$,
and consequently $1=k_{1}u_{1}+k_{2}u_{2}$ for some unit $u_{2}\in K$. This
shows that $K=K_{1}+K_{2}$. Since we already know that $K_{1}\cap K_{2}=0$,
we conclude that $K=K_{1}\times K_{2}$.

Observe that if $x\in K_{1}$ and $x\sim y$ in $K$, then $y\in K_{1}$
and $x\sim y$ in $K_{1}$. Hence, $\varTheta\left(K_{1}\right)\cong G_{1}\cong\varTheta\left(L_{1}\right)$
and similarly $\varTheta\left(K_{2}\right)\cong\varTheta\left(L_{2}\right)$.
\end{proof}

\section{The graph of the ring of integers modulo $m$}

In this section we describe the graph $\varTheta(\mathbb{Z}_m)$ since it will play an important role in the rest of the paper. We remark that graph $\Gamma_E(\mathbb{Z}_m)$ (see \S\ref{intro} for definition) is obtained from $\varTheta(\mathbb{Z}_m)$ by removing vertices $[0]$ and $[1]$ and all loops.

\begin{proposition}\label{lem:staircase--properties}
Let $k$ be a nonnegative integer. Up to graph isomorphism there exists a unique graph $SG_k$ such that $|V(SG_k)|=k+1$ and the degrees of vertices of $SG_{k}$ are $1,2,\ldots,k+1$.
In addition, if we let $v_i\in V\left(SG_{k}\right)$, $0 \leq i \leq k$, be the vertex with degree $i+1$, then $SG_k$ has the following properties:
\begin{enumerate}
\item\label{enu:staircase--neighbourhood} $N\left(v_{i}\right)=\left\{ v_{k-i},v_{k-i+1},\ldots,v_{k}\right\}$ for all $0 \leq i \leq k$,
\item\label{enu:staircase--chain--neighbourhoods} $N\left(v_{0}\right)\varsubsetneq\ldots\varsubsetneq N\left(v_{k-1}\right)\varsubsetneq N\left(v_{k}\right)$.\end{enumerate}
We will call $SG_k$ the \emph{staircase graph} with index $k$.
\end{proposition}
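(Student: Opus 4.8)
The plan is to treat existence and uniqueness separately, and to base everything on one explicit combinatorial model. For existence I would simply exhibit a graph with the required degree sequence: on the vertex set $\{v_0,v_1,\ldots,v_k\}$ declare $v_i$ and $v_j$ (possibly $i=j$) adjacent if and only if $i+j\ge k$. A direct count shows that the neighbours of $v_i$ are exactly those $v_j$ with $j\ge k-i$, namely $v_{k-i},v_{k-i+1},\ldots,v_k$, of which there are $i+1$; using the convention $\deg(v)=|N(v)|$ (a loop on $v_i$ occurring precisely when $2i\ge k$, and counted once), this gives $\deg(v_i)=i+1$, so the degrees are $1,2,\ldots,k+1$ as required. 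With this model, properties \ref{enu:staircase--neighbourhood} and \ref{enu:staircase--chain--neighbourhoods} are immediate: the description of $N(v_i)$ is exactly the stated one, and since $k-i$ strictly decreases as $i$ grows, the neighbourhoods form the strictly increasing chain of \ref{enu:staircase--chain--neighbourhoods}.

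The substance of the proposition is \emph{uniqueness}, which I would prove by strong induction on $k$, showing that \emph{any} graph $G$ on $k+1$ vertices with degree sequence $1,2,\ldots,k+1$ must realise the adjacency rule above. Since the degrees are pairwise distinct, the labelling assigning to each $i$ the unique vertex $v_i$ of degree $i+1$ is well defined. Two adjacencies are then forced at once: the vertex $v_k$ has the maximum possible degree $k+1$, so it is adjacent to every vertex and carries a loop; and $v_0$ has degree $1$ with $v_k$ already among its neighbours, so $N(v_0)=\{v_k\}$. For the inductive step ($k\ge 2$) I delete the pair $v_0,v_k$ and consider the induced subgraph $G'$ on $v_1,\ldots,v_{k-1}$. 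Each remaining $v_i$ was adjacent to $v_k$ but not to $v_0$, so its degree drops by exactly one to $i$; hence $G'$ has degree sequence $1,2,\ldots,k-1$ and the induction hypothesis identifies it with the staircase model of index $k-2$. Translating indices (the degree-$i$ vertex of $G'$ is $v_i$, corresponding to index $i-1$), the rule for $G'$ reads $v_i\sim v_j \iff (i-1)+(j-1)\ge k-2 \iff i+j\ge k$; since $G'$ is an induced subgraph, this is also the adjacency in $G$ for $1\le i,j\le k-1$. Combining this with the forced adjacencies of $v_0$ and $v_k$ --- all of which also satisfy $i+j\ge k$ --- shows that $G$ obeys the rule everywhere, so $G$ is isomorphic to the model. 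The cases $k=0$ (a single looped vertex) and $k=1$ (a vertex of degree $2$ with a loop, adjacent to a pendant vertex of degree $1$) serve as base cases, since the reduction lowers $k$ by two.

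I expect the main care to be bookkeeping around loops rather than any deep difficulty: one must consistently use the convention that a loop contributes $1$ to the degree, verify that the maximal degree $k+1$ really forces a loop on $v_k$, and keep the index translation in the inductive step exact. A secondary point to state cleanly is that deleting $v_0$ and $v_k$ decreases every surviving degree by precisely one --- because $v_k$ is universal while $v_0$ is a pendant attached only to $v_k$ --- which is what makes $G'$ fall under the induction hypothesis; handling the two small base cases separately is what allows the step-by-two induction.
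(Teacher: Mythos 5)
Your proof is correct and follows essentially the same route as the paper: the same explicit model ($v_i\sim v_j$ iff $i+j\ge k$) for existence, and a peeling argument for uniqueness that pins down the forced neighbourhoods of the extreme-degree vertices and works inward. The only difference is presentational: the paper runs the peeling as an explicit iterative labelling ($u_k$, then $u_0$, then $u_{k-1}$, then $u_1$, \dots) and reads off the neighbourhoods at the end, whereas you package it as an induction on $k$ that deletes the pair $v_0,v_k$ and drops to index $k-2$ --- a slightly cleaner formalization of the paper's ``continue this process'' step.
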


\begin{proof} Let $G$ be a graph with vertices $\{v_0,v_1,v_2,\ldots,v_k\}$, where vertices $v_i$ and $v_j$ (not necessarily distinct) are adjacent if and only if $i+j \geq n$. Then, clearly, $\deg (v_i)=i+1$ for all $0\leq i\leq k$, and graph $G$ satisfies \ref{enu:staircase--neighbourhood} and \ref{enu:staircase--chain--neighbourhoods}. Thus, it remains to prove the uniqueness of $SG_k$.

Let $H$ be any graph with $k+1$ vertices with degrees $1,2,\ldots,k+1$. One of the vertices has to have degree
$k+1$, so it has to be adjacent to every vertex, including itself.
We label that vertex by $u_{k}$.
One of the remaining vertices has to have degree $1$, so it has
no other neighbour besides $u_{k}$. We label that vertex by $u_{0}$.
One of the remaining, not yet labeled, vertices has to have degree $k$, so it has
to be adjacent to every vertex (including itself) except $u_{0}$.
We label that vertex by $u_{k-1}$, and continue.
One of the remaining vertices has to have degree $2$, so it has
no other neighbours besides $u_{k}$ and $u_{k-1}$. We label it by $u_{1}$.
One of the remaining vertices has to have degree $k-1$, so it has
to be adjacent to every vertex (including itself) except $u_{0}$
and $u_{1}$. We label that vertex by $u_{k-2}$.
Continuing this process, we eventually label all the vertices of $H$, and since $H$ has precisely $k+1$ vertices, the labels we use are precisely $u_0,u_1,\ldots,u_k$.
It is clear from the labeling process that we have
\[N(u_i)=\{u_{k-i},u_{k-i+1},\ldots,u_k\},\]
hence the map $H \to G$, defined by $u_i \mapsto v_i$, is a graph isomorphism. This shows the uniqueness of $SG_k$.
\end{proof}

Observe that the adjacency matrix of a staircase graph, with vertices ordered by degree, resembles a staircase, hence the name.

We now describe the zero-divisor graphs of rings $\mathbb{Z}_m$. By slight abuse of notation we will denote the elements of $\mathbb{Z}_m$ simply by integers instead of cosets of integers.

\begin{proposition}\label{prop:only--exponents--matter}
Let $m=p_{1}^{k_{1}}p_{2}^{k_{2}}\cdots p_{n}^{k_{n}}$
be a canonical representation of a positive integer $m$. Then $\varTheta\left(\mathbb{Z}_{m}\right)\cong\prod_{i=1}^{n}SG_{k_{i}}$.
\end{proposition}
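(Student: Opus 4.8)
The plan is to combine two facts already available in the excerpt: the Chinese Remainder Theorem, which gives a ring isomorphism $\mathbb{Z}_m \cong \prod_{i=1}^n \mathbb{Z}_{p_i^{k_i}}$, and Proposition~\ref{prop:preserves--products}, which says that $\varTheta$ preserves finite products. Applying the functor to the CRT isomorphism and using product-preservation yields
\[
\varTheta(\mathbb{Z}_m) \cong \varTheta\Bigl(\prod_{i=1}^n \mathbb{Z}_{p_i^{k_i}}\Bigr) \cong \prod_{i=1}^n \varTheta(\mathbb{Z}_{p_i^{k_i}}).
\]
Thus the entire statement reduces to the single claim that $\varTheta(\mathbb{Z}_{p^k}) \cong SG_k$ for a prime power $p^k$. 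This reduction is the conceptual heart of the argument and is essentially immediate from the functoriality results.

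For the prime-power case I would first describe the associatedness classes in $\mathbb{Z}_{p^k}$ explicitly. Every nonzero element of $\mathbb{Z}_{p^k}$ can be written uniquely as $p^j v$ with $0 \le j \le k-1$ and $v$ a unit (together with $0$, which we write as $p^k$ in this notation), and two elements are associated precisely when they have the same $p$-adic valuation $j$. Hence there are exactly $k+1$ associatedness classes, represented by $1, p, p^2, \ldots, p^{k-1}, 0$, so $|V(\varTheta(\mathbb{Z}_{p^k}))| = k+1$, matching $|V(SG_k)|$. Next I would compute the degrees: the class of $p^j$ is adjacent to the class of $p^i$ if and only if $p^{i+j} \equiv 0 \pmod{p^k}$, i.e.\ $i + j \ge k$. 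Setting $v_i = [p^{k-1-i}]$ for $0 \le i \le k-1$ and $v_k = [0] = [p^k]$, adjacency becomes exactly the condition that the sum of indices is at least $k$, which is the defining rule of the graph $G$ constructed in the proof of Proposition~\ref{lem:staircase--properties}. In particular the degree of the vertex indexed by $i$ is $i+1$, so the degree sequence is $1, 2, \ldots, k+1$.

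By the uniqueness part of Proposition~\ref{lem:staircase--properties}, any graph on $k+1$ vertices with degree sequence $1, 2, \ldots, k+1$ is isomorphic to the staircase graph $SG_k$; therefore $\varTheta(\mathbb{Z}_{p^k}) \cong SG_k$, and the product formula follows. I expect no serious obstacle here, since the combinatorial uniqueness is handed to us by Proposition~\ref{lem:staircase--properties} and product-preservation by Proposition~\ref{prop:preserves--products}. The only point requiring genuine care is the bookkeeping of the associatedness classes in $\mathbb{Z}_{p^k}$ and the translation of the valuation-sum adjacency condition $i + j \ge k$ into the staircase indexing so that the degree $i+1$ is correctly attached to the vertex labeled $v_i$; once the indices are aligned, the degree computation and the invocation of uniqueness are routine.
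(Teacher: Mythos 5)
Your proof is correct and follows essentially the same route as the paper: decompose $\mathbb{Z}_m$ by the Chinese Remainder Theorem, apply Proposition~\ref{prop:preserves--products}, and identify $\varTheta(\mathbb{Z}_{p^k})$ with $SG_k$ via the degree sequence and the uniqueness in Proposition~\ref{lem:staircase--properties}. One small indexing slip: since $\deg([p^j])=j+1$, the matching should be $v_j=[p^j]$ rather than $v_i=[p^{k-1-i}]$, but this does not affect the argument because the uniqueness statement only needs the degree sequence $1,2,\ldots,k+1$.
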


\begin{proof}
Observe that $\mathbb{Z}_{m}\cong\prod_{i=1}^{n}\mathbb{Z}_{p_{i}^{k_{i}}}$. It is easy to see that, for a prime $p$ and a nonnegative integer $k$, the graph $\varTheta\left(\mathbb{Z}_{p^k}\right)$ has vertices $[p^0], [p^1], [p^2],\ldots,[p^k]$, and the degree of $[p^j]$ is $j+1$.
Hence, $\varTheta\left(\mathbb{Z}_{p^k}\right)\cong SG_{k}$ by Proposition~\ref{lem:staircase--properties}. The result now follows from Proposition~\ref{prop:preserves--products}.
\end{proof}

Let $G=\varTheta\left(\mathbb{Z}_{m}\right)$ for some positive integer $m$.
By \cite[Lemma~4.3]{Lj}, every vertex in $\varTheta(\mathbb{Z}_m)$ is represented by a uniquely determined positive divisor of $m$. Let $n$ denote the number of distinct prime divisors of $m$. We
will show that the structure of $G$ determines uniquely the number
$n$ and the set of exponents in the canonical representation of $m$.
Starting from graph $G$, with no labels on vertices, we describe how to reconstruct the labels of $G$ (as associatedness classes) in terms of graph properties.
Of course, by Proposition~\ref{prop:only--exponents--matter}, the structure of $G$ does not determine the prime factors of $m$, so besides the graph itself, we will also need additional information on which primes are involved.

The only vertex in $G$ adjacent to every vertex (including itself), is $\left[0\right]$.
The only vertex of degree $1$, is $\left[1\right]$ and it is adjacent
only to $\left[0\right]$.
So we can label these two vertices immediately.

Observe that a vertex $v\in V\left(G\right)$ corresponds to some
prime $p$ if and only if $\deg (v)=2$, since its
neighbours in this case are precisely $\left[0\right]$ and $\left[m/p\right]$.
Since the structure of $G$ does
not determine the prime factors of $m$, we have to assign the degree
$2$ vertices some specific distinct primes, say $p_1,p_2,\ldots,p_n$, where $n$ is just the number of degree 2 vertices in $G$. So, now we have labels
$\left[0\right],\left[1\right],\left[p_{1}\right],\ldots,\left[p_{n}\right]$ and we want to reconstruct the labels of all the other vertices.

For a divisor $d$ of $m$ we will call vertex $\left[m/d\right]$ the complement of vertex $\left[d\right]$.
First we can identify the complements of $\left[p_{1}\right],\ldots,\left[p_{n}\right]$,
since the complement of $\left[p_{i}\right]$, is the unique neighbour
of $\left[p_{i}\right]$ different from $\left[0\right]$. We label the complement of $\left[p_{i}\right]$ by $\left[m/p_{i}\right]$, however this is not a true label yet, since we do not know yet what $m$ is or rather what the exponent of each $p_i$ in the factorization of $m$ is. We determine these exponents now.

Fix some $i\in\left\{ 1,2,\ldots,n\right\} $.
The classes of powers of $p_{i}$ are those neighbours of $\left[m/p_{i}\right]$
that are not neighbours of any $\left[m/p_{j}\right]$, $j\neq i$.
The number of such neighbours gives us the highest power of $p_{i}$
that divides $m$, say $p_{i}^{m_{i}}$. This, in particular, determines $m$ so we can now truly label the complement of each $[p_i]$. We can also label the vertices that correspond to powers of $p_i$. By the above let $v$ be a neighbour
of $\left[m/p_{i}\right]$ that is not a neighbour of any $\left[m/p_{j}\right]$, $j\neq i$. Then the label for $v$ is $\left[p^{\deg (v)-1}\right]$.
This is because the neighbours of $\left[p^{k}\right]$ are precisely
$\left[m\right],\left[m/p_{1}\right],\ldots,\left[m/p_{1}^{k}\right]$.
Observe that we have not changed the label of $\left[p_{i}\right]$ in this step.

Next, we label the complements of powers of primes. Fix some $i\in\left\{ 1,2,\ldots,n\right\} $.
The vertex $\left[m/p_{i}\right]$ is already labeled. For $k \geq 2$, the only neighbour
of $\left[p_{i}^{k}\right]$ that is not a neighbour of $\left[p_{i}^{k-1}\right]$,
must be labeled $\left[m/p_{i}^{k}\right]$.

Finally, we can label all the remaining vertices. Let $v$ be a vertex and for each $i \in \{1,2,\ldots,n\}$ let $k_{i}\leq m_{i}$ be the greatest nonnegative integer such that $v$
is a neighbour of $\left[m/p_{i}^{k_{i}}\right]$. Then the label of $v$ is $\left[p_{1}^{k_{1}}p_{2}^{k_{2}}\ldots p_{n}^{k_{n}}\right]$.

We remark that, although in this algorithm we label some vertices more than once,
the labels are consistent.

Having a labeled zero-divisor graph of $\mathbb{Z}_m$ it is now easy to reconstruct $\Gamma(\mathbb{Z}_m)$, the standard non-compressed zero-divisor graph $\Gamma(\mathbb{Z}_m)$ of $\mathbb{Z}_m$, as defined in \cite{And-Liv}. To do this we first exclude vertices
$\left[0\right]$ and $\left[1\right]$ from $\varTheta(\mathbb{Z}_m)$. Then we replace
each remaining vertex $\left[d\right]$, $d|m$, of $\varTheta(\mathbb{Z}_m)$ by the set 
\[
A_d=\left\{ ds : s\in\left\{ 1,2,\ldots,\frac{m}{d}-1\right\} ,\textrm{gcd}\left(s,m\right)=1\right\}. 
\]
The union of all these sets is the set of vertices of $\Gamma(\mathbb{Z}_m)$. If $\left[d_{1}\right]$ was adjacent to $\left[d_{2}\right]$
in graph $\varTheta(\mathbb{Z}_m)$, then every $x \in A_{d_1}$ is adjacent to every $y \in A_{d_2}$ in $\Gamma(\mathbb{Z}_m)$.
In particular, if there was a loop on vertex $\left[d\right]$ in $\varTheta(\mathbb{Z}_m)$, then $A_{d}$ is a clique (with no loops) in $\Gamma(\mathbb{Z}_m)$, and if there was no loop on $[d]$ in $\varTheta(\mathbb{Z}_m)$, then $A_{d}$ is an independant set in $\Gamma(\mathbb{Z}_m)$.
This ``blow up'' process has already been described by Spiroff and Wickham \cite[end of \S 1]{Spi-Wic}. However, in our situation, conveniently, the loops in $\varTheta(\mathbb{Z}_m)$ determine the edges between the vertices of $A_d$ in $\Gamma(\mathbb{Z}_m)$.
So this blow up process could be done entirely graph-theoretically if one was to encode in $\varTheta(\mathbb{Z}_m)$ also the size of associatedness classes, that is the size of sets $A_d$, say as weights of vertices.

\section{Local rings and principal ideal rings}

Recall that every finite commutative unital ring is isomorphic to a finite direct product of finite local rings (see for example \cite{Ati-Mac}). Hence, a finite commutative unital ring is local if and only if it is directly indecomposable.

\begin{corollary}\label{cor:local--local}
If $\varTheta\left(K\right) \cong \varTheta\left(L\right)$
and $K$ is local, then $L$ is local as well.
\end{corollary}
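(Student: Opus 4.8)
The plan is to argue by contradiction, using Theorem~\ref{prop:preproduct} to transport a hypothetical decomposition of $L$ back to $K$. Recall the structure theory recalled above: a finite commutative unital ring is local exactly when it is directly indecomposable. So I would suppose, for contradiction, that $L$ is not local. Then $L$ is directly decomposable, i.e.\ $L \cong L_1 \times L_2$ for two \emph{nonzero} rings $L_1, L_2 \in \mathrm{obj}\mathbf{FinCRing}$. (I would dispose of the degenerate possibility $L = 0$ separately at the outset: in that case $\varTheta(L) \cong \varTheta(0)$ is the one-vertex graph, forcing $\varTheta(K)$ to have a single vertex and hence $K = 0$, which is impossible for a local ring.)

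First I would push this decomposition through the functor. Applying $\varTheta$ and invoking Proposition~\ref{prop:preserves--products} (preservation of finite products) together with the hypothesis $\varTheta(K) \cong \varTheta(L)$, I obtain
\[
\varTheta(K) \cong \varTheta(L) \cong \varTheta(L_1 \times L_2) \cong \varTheta(L_1) \times \varTheta(L_2).
\]
Now Theorem~\ref{prop:preproduct} applies directly and yields subrings $K_1, K_2 \subseteq K$ with $K = K_1 \times K_2$ and $\varTheta(K_i) \cong \varTheta(L_i)$ for $i = 1,2$.

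The remaining step, and the only place where a little care is needed, is to check that this decomposition of $K$ is genuinely nontrivial, i.e.\ that neither $K_i$ is the zero ring. This is where the isomorphisms $\varTheta(K_i) \cong \varTheta(L_i)$ pay off: since $L_i \neq 0$, the classes $[0] = \{0\}$ and $[1]$ (the set of units) are distinct vertices of $\varTheta(L_i)$, so $\varTheta(L_i)$ has at least two vertices. Hence $\varTheta(K_i)$ has at least two vertices, whereas $\varTheta(0)$ has exactly one; therefore $K_i \neq 0$. Thus $K \cong K_1 \times K_2$ with both factors nonzero, contradicting the direct indecomposability of the local ring $K$, and the corollary follows.

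I do not anticipate a serious obstacle here, since Theorem~\ref{prop:preproduct} already encapsulates the hard ``backward'' direction of product preservation; the corollary is essentially an immediate consequence once the nonzero-factor bookkeeping is in place. The one subtle point worth flagging is precisely that bookkeeping --- ensuring the factors of $K$ are nonzero --- because it is what converts the abstract decomposition supplied by Theorem~\ref{prop:preproduct} into an actual contradiction with indecomposability.
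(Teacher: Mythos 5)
Your proof is correct and follows essentially the same route as the paper: assume $L$ is not local, decompose $L = L_1 \times L_2$ with both factors nonzero, push through $\varTheta$ via Proposition~\ref{prop:preserves--products}, pull back via Theorem~\ref{prop:preproduct} to get $K = K_1 \times K_2$, and note $K_i \neq 0$ since $\varTheta(K_i) \cong \varTheta(L_i) \ncong \varTheta(0)$, contradicting locality of $K$. Your extra justification that $\varTheta(L_i)$ has at least two vertices is a valid (and slightly more explicit) version of the paper's observation.
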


\begin{proof}
Suppose $L$ is not local. Then $L=L_{1}\times L_{2}$ where $L_{1},L_{2} \neq 0$.
Hence, $\varTheta\left(K\right)=\varTheta\left(L\right)=\varTheta\left(L_{1}\right)\times\varTheta\left(L_{2}\right)$
by Proposition~\ref{prop:preserves--products}.

By Theorem~\ref{prop:preproduct}, there exist subrings $K_{1},K_{2}\subseteq K$
such that $K=K_{1}\times K_{2}$ and $\varTheta\left(K_{1}\right)\cong\varTheta\left(L_{1}\right)$
and $\varTheta\left(K_{2}\right)\cong\varTheta\left(L_{2}\right)$.
Since $L_{1} \neq 0$, we have $\varTheta\left(K_{1}\right)\cong\varTheta\left(L_{1}\right)\ncong\varTheta\left(0\right)$,
hence also $K_{1}\neq 0$. Similarly, $K_2 \neq 0$. This is a contradiction because
$K$ is local.
\end{proof}

From Corollary~\ref{cor:local--local} and the fact that $\varTheta\left(\mathbb{Z}_{p^{n}}\right)\cong SG_{n}$,
we immediately obtain the following result.

\begin{corollary}\label{cor:staircase--local}
If $\varTheta\left(K\right)$ is isomorphic
to the staircase graph $SG_{n}$, then $K$ is a local ring.
\end{corollary}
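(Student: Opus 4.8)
The plan is to deduce this statement as a direct specialization of Corollary~\ref{cor:local--local}, so that the work reduces to exhibiting a single local ring whose compressed zero-divisor graph is the staircase graph $SG_n$. The obvious candidate is $\mathbb{Z}_{p^n}$ for any fixed prime $p$: the proof of Proposition~\ref{prop:only--exponents--matter} already records that $\varTheta(\mathbb{Z}_{p^k}) \cong SG_k$ for every prime $p$ and every nonnegative integer $k$, so in particular $\varTheta(\mathbb{Z}_{p^n}) \cong SG_n$.

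First I would note that $\mathbb{Z}_{p^n}$ is local, its unique maximal ideal being $(p)$; equivalently, it is directly indecomposable. Then, given the hypothesis $\varTheta(K) \cong SG_n$, I would chain the two isomorphisms into $\varTheta(K) \cong SG_n \cong \varTheta(\mathbb{Z}_{p^n})$, that is, $\varTheta(K) \cong \varTheta(\mathbb{Z}_{p^n})$. Applying Corollary~\ref{cor:local--local} with its local ring taken to be $\mathbb{Z}_{p^n}$ and its second ring taken to be $K$ then yields that $K$ is local.

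The one point that deserves a moment's attention is the direction in which Corollary~\ref{cor:local--local} is invoked: that corollary infers locality of the \emph{second} ring from locality of the \emph{first}, so $\mathbb{Z}_{p^n}$ (the ring we already know to be local) must play the role of the first argument and $K$ the role of the second. Beyond this bookkeeping there is no genuine obstacle: all of the real content has been absorbed into Theorem~\ref{prop:preproduct} and Corollary~\ref{cor:local--local}, and the present statement is merely the observation that the staircase graph is realized by the local ring $\mathbb{Z}_{p^n}$, so that any $K$ producing the same graph must itself be indecomposable, hence local.
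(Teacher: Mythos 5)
Your proposal is correct and follows exactly the paper's own argument: the paper likewise derives this corollary immediately from Corollary~\ref{cor:local--local} together with the fact that $\varTheta\left(\mathbb{Z}_{p^{n}}\right)\cong SG_{n}$, which is the local-ring realization of the staircase graph. Your extra remark about which ring plays which role in Corollary~\ref{cor:local--local} is accurate but immaterial, since graph isomorphism is symmetric.
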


It turns out that locality of a finite commutative unital ring is a property that can be characterized by the structure of its zero-divisor graph as is shown by the next theorem.
For $a\in K$, we will adopt the convention that $a^{0}=1$ even when $a=0$. If $K$ is a finite local unital ring with maximal ideal $\mathfrak{m}$ then every non-unit of $K$ is contained in $\mathfrak{m}$ and $\mathfrak{m}$ is a nilpotent ideal. Hence, every element of $K$ is either a unit or a nilpotent element.

\begin{theorem}\label{prop:local--iff}
Let $K$ be a finite commutative unital ring.
Then $K$ is local if and only if for all $a,b\in K$ with $\left\{ \left[a\right],\left[b\right]\right\} \cap\left\{ \left[0\right],\left[1\right]\right\} =\emptyset$
we have
\[
N\left(\left[a\right]\right)\cup N\left(\left[b\right]\right)\varsubsetneq N\left(\left[ab\right]\right)
\]
within $\varTheta(K)$.
\end{theorem}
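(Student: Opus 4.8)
The plan is to translate everything into annihilator ideals. For any $x \in K$ the neighbourhood $N([x])$ consists of the classes $[c]$ with $xc = 0$; since $\ann(x)$ is a union of associatedness classes, $N([x])$ is precisely the set of classes contained in $\ann(x)$, and hence $N([x]) \subseteq N([y])$ iff $\ann(x) \subseteq \ann(y)$, with equality of neighbourhoods iff equality of annihilators. Because $ac = 0$ or $bc = 0$ both force $abc = 0$, one always has $\ann(a) \cup \ann(b) \subseteq \ann(ab)$, so the inclusion $N([a]) \cup N([b]) \subseteq N([ab])$ holds automatically and only its strictness is at issue. The key reduction I would record first is this: the inclusion fails to be strict exactly when $\ann(a) \cup \ann(b) = \ann(ab)$; since $\ann(ab)$ is an additive subgroup, this equality makes $\ann(a) \cup \ann(b)$ a subgroup, which is possible only if one of $\ann(a), \ann(b)$ contains the other. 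Thus (after possibly swapping $a$ and $b$) failure forces $\ann(a) \subseteq \ann(b)$ and $\ann(ab) = \ann(b)$. Both directions of the theorem will be argued through this reformulation.

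For the implication that the condition forces locality I would argue contrapositively. If $K$ is not local then, by the decomposition recalled before the theorem, $K$ is directly decomposable and so possesses an idempotent $e \notin \{0,1\}$; such an $e$ is a nonzero zero-divisor with $[e] \notin \{[0],[1]\}$. Taking $a = b = e$ gives $ab = e^2 = e$, so $N([a]) \cup N([b]) = N([e]) = N([ab])$ and the strict inclusion fails. Hence the condition cannot hold when $K$ is non-local.

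For the converse I would assume $K$ is local with maximal ideal $\mathfrak{m}$ and suppose, for contradiction, that the condition fails for some nonzero zero-divisors $a, b$. By the reduction above we may assume $\ann(ab) = \ann(b)$, and the crux is to extract a contradiction from this single equality. The idea is to consider multiplication by $a$ as a map $bK \to bK$, $y \mapsto ay$ (well defined since $a\cdot bK = abK \subseteq bK$). Its kernel consists of those $y = bx$ with $abx = 0$, that is with $x \in \ann(ab) = \ann(b)$, hence with $bx = 0$; so the map is injective, and as $bK$ is finite it is bijective, giving $abK = bK$. Therefore $b = abk$ for some $k \in K$, so $b(1 - ak) = 0$. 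Since $a$ is a non-unit it lies in $\mathfrak{m}$, whence $1 - ak$ is a unit of the local ring $K$, and we conclude $b = 0$, contradicting that $b$ is a nonzero zero-divisor.

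The step that uses the local hypothesis essentially, and which I regard as the heart of the argument, is the passage from $abK = bK$ to $b = 0$ via the invertibility of $1 - ak$. Everything preceding it — the reduction of a failure of strictness to $\ann(ab) = \ann(b)$, and the cardinality argument yielding $abK = bK$ — is formal and valid in any finite commutative ring, so the main obstacle is isolating exactly where locality enters and verifying that $1 - ak$ is forced to be a unit.
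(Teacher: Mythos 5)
Your proof is correct, and it takes a genuinely different route from the paper's. The paper proves the forward direction by a double induction: assuming $N([a])\cup N([b])=N([ab])$ it first shows $a^kb^l\neq 0$ for all $k\leq n-1$, $l\leq m-1$ (where $n,m$ are the nilpotency indices of $a,b$), then that $N([a^kb^l])=N([a^k])\cup N([b^l])$, and finally reduces to elements $A=a^{n-1}$, $B=b^{m-1}$ with $A^2=B^2=0$ but $AB\neq 0$, reaching a contradiction from $AB(A+B)=0$. You instead observe that failure of strictness means $\ann(a)\cup\ann(b)=\ann(ab)$, invoke the fact that a group is never the union of two proper subgroups to reduce to $\ann(ab)=\ann(b)$, and then run a Nakayama-type argument: injectivity of multiplication by $a$ on the finite set $bK$ gives $abK=bK$, whence $b(1-ak)=0$ with $1-ak$ a unit because $a\in\mathfrak{m}$. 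Your argument is shorter and more transparent about where locality enters (only in the invertibility of $1-ak$; the counterexample $a=b=(1,0)$ in $\mathbb{Z}_2\times\mathbb{Z}_2$ shows this step genuinely needs it), whereas the paper's induction, while longer, stays entirely inside the graph-theoretic language of neighbourhoods, which fits its programme of reading ring properties off $\varTheta(K)$. The reverse direction is the same in both: a nontrivial idempotent $e$ with $a=b=e$ kills strictness. All the individual steps of your reduction check out, in particular that $\ann(x)$ is a union of associatedness classes so that containment of neighbourhoods is equivalent to containment of annihilators, and that injectivity of $y\mapsto ay$ on $bK$ is independent of the chosen representation $y=bx$.
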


\begin{proof}
Suppose $K$ is local. Then the condition $\left\{ \left[a\right],\left[b\right]\right\} \cap\left\{ \left[0\right],\left[1\right]\right\} =\emptyset$
implies that $a$ and $b$ are nontrivial nilpotents. Let $a^{n}=0$, $b^{m}=0$, $a^{n-1}\neq0$, and $b^{m-1}\neq0$,
where $m,n\geq2$. Suppose $N\left(\left[a\right]\right)\cup N\left(\left[b\right]\right)=N\left(\left[ab\right]\right)$.

We show by induction on $k+l$ that $a^{k}b^{l}\neq 0$ for all $k\in\left\{ 0,1,\ldots,n-1\right\} $
and $l\in\left\{ 0,1,\ldots,m-1\right\} $.
If $k=0$ or $l=0$, this holds by definition of $n$ and $m$. So,
suppose $k,l\geq1$ and assume, on the contrary, that $a^{k}b^{l}=0$.
Then $\left[a^{k-1}b^{l-1}\right]\in N\left(\left[ab\right]\right)=N\left(\left[a\right]\right)\cup N\left(\left[b\right]\right)$.
Hence, either $a^{k}b^{l-1}=0$ or $a^{k-1}b^{l}=0$. But this is
impossible by induction. 

Next, we show by induction on $k+l$ that $N\left(\left[a^{k}b^{l}\right]\right)=N\left(\left[a^{k}\right]\right)\cup N\left(\left[b^{l}\right]\right)$
for all $k,l\geq0$. If $k=0$ or $l=0$, this
is obvious. So, assume $k,l\geq1$. Let $\left[x\right]\in N\left(\left[a^{k}b^{l}\right]\right)$.
Then $\left[a^{k-1}b^{l-1}x\right]\in N\left(\left[ab\right]\right)=N\left(\left[a\right]\right)\cup N\left(\left[b\right]\right)$.
Hence, either $a^{k}b^{l-1}x=0$ or $a^{k-1}b^{l}x=0$. By induction,
the first equality implies $\left[x\right]\in N\left(\left[a^{k}b^{l-1}\right]\right)=N\left(\left[a^{k}\right]\right)\cup N\left(\left[b^{l-1}\right]\right)\subseteq N\left(\left[a^{k}\right]\right)\cup N\left(\left[b^{l}\right]\right)$.
Similiarly, the second equality also implies $\left[x\right]\in N\left(\left[a^{k}\right]\right)\cup N\left(\left[b^{l}\right]\right)$.
This shows that $N\left(\left[a^{k}b^{l}\right]\right)\subseteq N\left(\left[a^{k}\right]\right)\cup N\left(\left[b^{l}\right]\right)$,
hence $N\left(\left[a^{k}b^{l}\right]\right)=N\left(\left[a^{k}\right]\right)\cup N\left(\left[b^{l}\right]\right)$.

Now let $A=a^{n-1}$ and $B=b^{m-1}$. Then, by the above, we have
$A\neq0$, $B\neq0$, $AB\neq0$, $A^{2}=0$, $B^{2}=0$ and $N\left(\left[AB\right]\right)=N\left(\left[A\right]\right)\cup N\left(\left[B\right]\right)$.
This implies $AB\left(A+B\right)=A^{2}B+AB^{2}=0$, so $A+B\in N\left(\left[AB\right]\right)=N\left(\left[A\right]\right)\cup N\left(\left[B\right]\right)$.
Hence, either $0=\left(A+B\right)A=A^{2}+AB=AB$ or $0=\left(A+B\right)B=AB+B^{2}=AB$.
This is a contradiction which shows that $N\left(\left[a\right]\right)\cup N\left(\left[b\right]\right)\varsubsetneq N\left(\left[ab\right]\right)$.

Now, suppose $K$ is not local. Any finite commutative unital ring
is a direct product of local rings, hence $K=K_{1}\times K_{2}$ for
some nonzero rings $K_{1}$ and $K_{2}$. If we take $a=\left(1,0\right)\in K$,
then clearly $\left[a\right]\notin\left\{ \left[0\right],\left[1\right]\right\} $
and $N\left(\left[aa\right]\right)=N\left(\left[a\right]\right)=N\left(\left[a\right]\right)\cup N\left(\left[a\right]\right)$.\end{proof}

\begin{corollary}
Let $K$ be a finite commutative unital local ring which is not a
field. If $\left[a\right]\neq\left[1\right]$ has the least degree
in $\varTheta\left(K\right)$, apart from $\left[1\right]$, then
$a\in K$ is an irreducible element.
\end{corollary}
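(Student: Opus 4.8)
The plan is to show that no nontrivial factorization of $a$ can exist, by exploiting the fact that multiplication can only enlarge annihilators, together with the minimality of $\deg([a])$, and then converting an equality of annihilators into an associatedness via a cardinality count.

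First I would record two preliminary facts. Since $K$ is not a field, its maximal ideal $\mathfrak{m}$ is nonzero, so there is a nonzero nilpotent $x$ with $\ann(x)\subsetneq K$; as $[0]$ is adjacent to every vertex while $[x]$ is not adjacent to $[1]$ (because $x\cdot 1=x\neq 0$), we have $\deg([x])<\deg([0])$. Hence the least degree apart from $[1]$ is not attained at $[0]$, so $[a]\neq[0]$, i.e. $a\neq 0$; together with $[a]\neq[1]$ this makes $a$ a nonzero non-unit, that is, a nontrivial nilpotent. Second, for any $v\in K$ the degree of $[v]$ equals the number of associatedness classes contained in $\ann(v)$: indeed $N([v])=\{[w]:vw=0\}$, and $\ann(v)$ is a union of associatedness classes, since $vw=0$ and $w'=wu$ with $u$ a unit give $vw'=vwu=0$.

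Next I would set up the factorization. Suppose, for contradiction, that $a=bc$ with neither $b$ nor $c$ a unit. As $K$ is local, $b,c\in\mathfrak{m}$, and $b,c\neq 0$ because $a\neq 0$. From $a=bc$ we get $\ann(b)\subseteq\ann(bc)=\ann(a)$, since $bw=0$ forces $aw=bcw=0$. Counting classes yields $\deg([b])\leq\deg([a])$. On the other hand $[b]\neq[1]$, so the minimality of $\deg([a])$ forces the reverse inequality, whence $\deg([b])=\deg([a])$. Since $\ann(b)\subseteq\ann(a)$ are both unions of associatedness classes with the same finite number of classes, the two sets coincide, so $\ann(b)=\ann(a)$.

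Finally I would pass from annihilators to principal ideals via the identity $|K|=|vK|\cdot|\ann(v)|$, valid in any finite ring because the additive map $r\mapsto vr$ has kernel $\ann(v)$ and image $vK$. From $\ann(a)=\ann(b)$ we obtain $|aK|=|bK|$; combined with $aK\subseteq bK$ (as $a=bc\in bK$) this gives $aK=bK$, so $a\sim b$ by the Artinian argument used in the proof of Proposition~\ref{prop:best}. Writing $a=bv$ for a unit $v$ and comparing with $a=bc$ gives $b(v-c)=0$, so $v-c\in\ann(b)\subseteq\mathfrak{m}$. But $c\in\mathfrak{m}$, hence $v=(v-c)+c\in\mathfrak{m}$, contradicting that $v$ is a unit. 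Thus in every factorization $a=bc$ one factor is a unit, so $a$ is irreducible. The main obstacle is the passage from equality of degrees to the genuine ideal equality $\ann(b)=\ann(a)$ (not merely of cardinalities), since this is exactly what feeds the count $|aK|=|bK|$; the decisive point is that the degree literally enumerates associatedness classes inside the annihilator, so a containment of annihilators with equal degrees forces the annihilators to be the same set. Everything after that is a short local-ring computation.
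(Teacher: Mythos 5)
Your proof is correct, but it takes a genuinely different route from the paper. The paper's proof is a three-line application of Theorem~\ref{prop:local--iff}: assuming $a=bc$ with $b,c$ non-units (and noting $a\neq 0$ since $K$ is not a field), that theorem gives the \emph{strict} inclusion $N([b])\cup N([c])\varsubsetneq N([a])$, hence $\deg([b])<\deg([a])$, contradicting minimality outright. You instead extract only the easy non-strict inequality $\deg([b])\leq\deg([a])$ from the trivial containment $\ann(b)\subseteq\ann(a)$, use minimality to force equality of degrees, upgrade this to $\ann(b)=\ann(a)$ (correctly, via the observation that both annihilators are unions of associatedness classes and the degree counts exactly those classes), and then convert to $|bK|=|aK|$ via $|K|=|vK|\cdot|\ann(v)|$, to $aK=bK$ via $aK\subseteq bK$, and to $a\sim b$ via Kaplansky's observation already used in Proposition~\ref{prop:best}; the final step $b(v-c)=0$ with $v$ a unit and $c\in\mathfrak{m}$ cleanly lands the contradiction. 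Each step checks out. What the comparison buys: the paper's argument is much shorter but leans on the full strength of Theorem~\ref{prop:local--iff}, whose proof is the most involved part of that section; yours is longer but entirely self-contained, using only elementary counting in finite rings, and it establishes along the way the sharper fact that any non-unit divisor $b$ of such an $a$ is actually associated to $a$.
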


\begin{proof}
Suppose $a=bc$, where $b$ and $c$ are not units. Observe that $a\neq0$
since $K$ is not a field. Then by Theorem~\ref{prop:local--iff},
$N\left(\left[b\right]\right)\cup N\left(\left[c\right]\right)\varsubsetneq N\left(\left[bc\right]\right)=N\left(\left[a\right]\right)$
which implies $\left|N\left(\left[b\right]\right)\right|,\left|N\left(\left[c\right]\right)\right|<\left|N\left(\left[a\right]\right)\right|$,
a contradiction.
\end{proof}

Suppose $\varTheta\left(K\right)\cong SG_{n}$. Each vertex in this graph corresponds to the associatedness class of some element in $K$. We want to find a nice set of elements that represent the vertices of $\varTheta\left(K\right)$. To this end we need the following two lemmas.

\begin{lemma}\label{prop:local--distributive}
Let $K$ be a local ring with maximal
ideal $\mathfrak{m}$, such that $\varTheta\left(K\right)\cong SG_{n}$.
Denote representatives of associatedness classes in such a way that $N\left(\left[a_{0}\right]\right)\varsubsetneq\ldots\varsubsetneq N\left(\left[a_{n-1}\right]\right)\varsubsetneq N\left(\left[a_{n}\right]\right)$.
Let $i<j$ and $y\in\mathfrak{m}$. Then
\begin{enumerate}
\item\label{enu:local1} $N\left(\left[ya_{i}\right]\right)\subseteq N\left(\left[ya_{j}\right]\right)$, and
\item\label{enu:local2} if $N\left(\left[ya_{i}\right]\right)=N\left(\left[ya_{j}\right]\right)$
then $ya_{i}=ya_{j}=0$.
\end{enumerate}
\end{lemma}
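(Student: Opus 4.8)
The plan is to convert every graph-theoretic statement into one about annihilators. Since $N([w])=\{[x]:wx=0\}$, the union of the classes lying in $N([w])$ is precisely $\ann(w)$, so $N([u])\subseteq N([v])$ is equivalent to $\ann(u)\subseteq\ann(v)$. Thus the hypothesis $N([a_0])\varsubsetneq\cdots\varsubsetneq N([a_n])$ is the strictly increasing chain $\ann(a_0)\varsubsetneq\cdots\varsubsetneq\ann(a_n)$, which forces $a_0\sim 1$ and $a_n=0$. Two further features of $SG_n$ will be used: the degrees of its vertices are pairwise distinct, so $N([u])=N([v])$ implies $[u]=[v]$; and by Proposition~\ref{lem:staircase--properties}\ref{enu:staircase--neighbourhood} we have $N([a_i])=\{[a_{n-i}],\dots,[a_n]\}$, whence $a_pa_q=0$ if and only if $p+q\ge n$, and $\ann(a_i)\subseteq\ann(a_j)$ whenever $i\le j$. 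Part \ref{enu:local1} is then immediate: if $[x]\in N([ya_i])$ then $(xy)a_i=0$, i.e.\ $xy\in\ann(a_i)\subseteq\ann(a_j)$, so $x\cdot ya_j=0$ and $[x]\in N([ya_j])$.

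For part \ref{enu:local2}, suppose $N([ya_i])=N([ya_j])$. By distinctness of degrees this upgrades to $[ya_i]=[ya_j]$; in particular $ya_i=0$ if and only if $ya_j=0$, so it suffices to rule out $ya_i\ne 0$. Assume $ya_i\ne 0$. Writing the associatedness $ya_i\sim ya_j$ as $ya_i=u\,ya_j$ for a unit $u$ and putting $e=a_i-ua_j$, we get $ye=0$. Every $t\in\ann(a_i)$ also kills $a_j$ (as $\ann(a_i)\subseteq\ann(a_j)$), hence $te=0$; so $\ann(a_i)\subseteq\ann(e)$ and $[e]$ equals some $[a_s]$ with $s\ge i$. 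The value $s=i$ is impossible, since $e\sim a_i$ together with $ye=0$ would give $ya_i=0$. Therefore $s\ge i+1$, and because $s+(n-i-1)\ge n$ we obtain $e\in\ann(a_{n-i-1})$; here $n-i-1\ge 0$ since $i<j\le n$. Moreover $j\ge i+1$ gives $a_ja_{n-i-1}=0$, so $ua_j\in\ann(a_{n-i-1})$ as well. As $\ann(a_{n-i-1})$ is an ideal it contains $a_i=e+ua_j$; but $a_ia_{n-i-1}\ne 0$ because $i+(n-i-1)=n-1<n$, a contradiction. Hence $ya_i=0$, and then $ya_j=0$.

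The crux is part \ref{enu:local2}. The inclusions from part \ref{enu:local1} only yield an inequality of ``levels'', so the real work is to extract new information from the \emph{equality} of the two neighbourhoods; the distinct-degree property of $SG_n$ is what converts that equality into associatedness, and the annihilator ideal $\ann(a_{n-i-1})$ is precisely the one separating $a_i$ from both $e$ and $ua_j$. I expect the only delicate bookkeeping to be the index checks that make $a_{n-i-1}$ meaningful, namely $0\le n-i-1$, which holds because $i<j\le n$.
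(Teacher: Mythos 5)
Your proof is correct; part \ref{enu:local1} coincides with the paper's argument, but for part \ref{enu:local2} you take a genuinely different, though closely related, route. Both proofs start identically: the distinct degrees in $SG_n$ upgrade $N([ya_i])=N([ya_j])$ to $[ya_i]=[ya_j]$, and one forms the element $e=a_i-ua_j$ annihilated by $y$. The paper then tracks where $[y]$ sits in the chain: it chooses the greatest $k$ with $[a_k]\in N([a_j])\setminus N([a_i])$, observes $a_k e=a_k a_i\neq 0$ while $ye=0$, deduces that $[y]=[a_l]$ with $l>k$, and from the maximality of $k$ concludes $[a_l]\in N([a_i])$, giving $ya_i=ya_j=0$ directly, with no reduction to the case $ya_i\neq 0$. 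You instead track where $[e]$ sits: from $\ann(a_i)\subseteq\ann(e)$ you get $[e]=[a_s]$ with $s\geq i$, exclude $s=i$, and reach a contradiction by multiplying $a_i=e+ua_j$ by $a_{n-i-1}$. Your version is a proof by contradiction and is slightly more computational, but it trades the ``greatest $k$'' bookkeeping for the single well-chosen annihilating element $a_{n-i-1}$; all the index checks you flag ($n-i-1\geq 0$, $s+(n-i-1)\geq n$, $j+(n-i-1)\geq n$) are valid, as is the preliminary reduction via $ya_i=0\Leftrightarrow ya_j=0$.
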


\begin{proof}
\ref{enu:local1} Since $N\left(\left[a_{i}\right]\right)\varsubsetneq N\left(\left[a_{j}\right]\right)$,
we have $\ann\left( a_{i} \right)\varsubsetneq\ann\left( a_{j} \right)$,
hence $\ann\left( ya_{i} \right)\subseteq\ann\left( ya_{j} \right)$,
so $N\left(\left[ya_{i}\right]\right)\subseteq N\left(\left[ya_{j}\right]\right)$.

\smallskip\noindent\ref{enu:local2} Since neighbourhoods of distinct vertices of $\varTheta\left(K\right)\cong SG_{n}$
are distinct, we must have $\left[ya_{i}\right]=\left[ya_{j}\right]$,
hence $ya_{i}=ya_{j}u$ for some unit $u$. So, 
\begin{equation}
y\left(a_{i}-a_{j}u\right)=0.\label{eq:1-3}
\end{equation}
Since $N\left(\left[a_{i}\right]\right)\varsubsetneq N\left(\left[a_{j}\right]\right)$,
we can choose the greatest $k$ such that $\left[a_{k}\right]\in N\left(\left[a_{j}\right]\right)\setminus N\left(\left[a_{i}\right]\right)$.
Then 
\begin{equation}
a_{k}\left(a_{i}-a_{j}u\right)=a_{k}a_{i}\neq0.\label{eq:2-2}
\end{equation}
Since $y\in\mathfrak{m}$, there exists $l>0$ such that $\left[y\right]=\left[a_{l}\right]$.
From (\ref{eq:1-3}) and (\ref{eq:2-2}) we conclude $N\left(\left[a_{l}\right]\right)\nsubseteq N\left(\left[a_{k}\right]\right)$,
hence $l>k$.
Proposition~\ref{lem:staircase--properties} tells us that $N\left(\left[a_{j}\right]\right)=\left\{ \left[a_{n-j}\right],\left[a_{n-j+1}\right],\ldots,\left[a_{n}\right]\right\} $.
This implies $\left[a_{l}\right]\in N\left(\left[a_{j}\right]\right)$,
hence $\left[a_{l}\right]\in N\left(\left[a_{i}\right]\right)$ by
the choice of $k$. Therefore, $a_{l}a_{j}=a_{l}a_{i}=0$ and consequently $ya_{j}=ya_{i}=0$.\end{proof}

\begin{lemma}\label{lem:local--different--powers}
Let $K$ be a local ring with
maximal ideal $\mathfrak{m}$, such that $\varTheta\left(K\right)\cong SG_{n}$.
If $x\in\mathfrak{m}$, with $x^{m}=0$ and $x^{m-1}\neq 0$, then $\left[x^{k}\right]\neq\left[x^{l}\right]$
for all $0\leq k<l\leq m$.
\end{lemma}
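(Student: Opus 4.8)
The plan is to recast the claim in terms of neighbourhoods and then to produce an explicit strictly increasing chain of them. Recall that in $\varTheta(K)$ two classes $[u],[v]$ are adjacent exactly when $uv=0$, so that $[w]\in N([v])$ if and only if $vw=0$; in particular $N([v])$ is nothing but the set of classes represented by the elements of $\ann(v)$. Since a vertex has a single, well-defined neighbourhood, it suffices to show that the neighbourhoods $N([x^{0}]),N([x^{1}]),\ldots,N([x^{m}])$ are pairwise distinct, for then the vertices $[x^{0}],\ldots,[x^{m}]$ must be pairwise distinct as well.

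First I would record the inclusions $N([x^{k}])\subseteq N([x^{k+1}])$ for $0\leq k\leq m-1$, which are immediate: $x^{k}w=0$ forces $x^{k+1}w=x\cdot x^{k}w=0$. The crucial step is to check that each of these inclusions is \emph{strict}. For this I would exhibit the witness $x^{m-1-k}$ (note $m-1-k\geq0$): on one hand $x^{k+1}\cdot x^{m-1-k}=x^{m}=0$, so $[x^{m-1-k}]\in N([x^{k+1}])$, while on the other hand $x^{k}\cdot x^{m-1-k}=x^{m-1}\neq0$ by hypothesis, so $[x^{m-1-k}]\notin N([x^{k}])$. This yields the strict chain
\[
N([x^{0}])\varsubsetneq N([x^{1}])\varsubsetneq\cdots\varsubsetneq N([x^{m}]),
\]
whose $m+1$ terms are necessarily pairwise distinct; by the previous paragraph the classes $[x^{k}]$, $0\leq k\leq m$, are therefore pairwise distinct, which is exactly the assertion.

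Honestly, the only mathematical content here is the choice of the witness $x^{m-1-k}$, and everything else is routine, so I do not anticipate a genuine obstacle. It is worth noting that this argument does not actually use the hypothesis $\varTheta(K)\cong SG_{n}$: the nilpotency data $x^{m-1}\neq0=x^{m}$ alone forces the neighbourhoods $N([x^{k}])$ to form a strict chain. (The staircase hypothesis does more, namely it makes \emph{all} neighbourhoods in $\varTheta(K)$ into a strict chain, but that global fact is not needed for the powers of a single $x$.) If one prefers to stay within the framework already developed, the strictness can equally be extracted from Lemma~\ref{prop:local--distributive}\ref{enu:local2} by taking $y=x^{m-1-k}$ together with the representatives ordered by neighbourhood inclusion; I would, however, favour the self-contained witness above for its transparency.
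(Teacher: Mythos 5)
Your argument is correct, but it is genuinely different from the one in the paper. The paper argues directly with ring elements: assuming $[x^{k}]=[x^{l}]$ with $k<l$ gives $x^{k}=x^{l}u$ for a unit $u$, hence $x^{k}(1-x^{l-k}u)=0$; since $x^{l-k}u$ is nilpotent, $1-x^{l-k}u$ is a unit, forcing $x^{k}=0$ and contradicting $x^{m-1}\neq 0$ (as $k\leq m-1$). You instead pass to neighbourhoods and exhibit the witness $x^{m-1-k}$ to show that $N([x^{0}])\varsubsetneq N([x^{1}])\varsubsetneq\cdots\varsubsetneq N([x^{m}])$ is a strict chain, then conclude the classes are distinct because equal vertices have equal neighbourhoods. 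Both routes are valid, and your observation that the hypothesis $\varTheta(K)\cong SG_{n}$ (and indeed locality of $K$) is not needed applies equally to the paper's proof, which only uses that $x$ is nilpotent. The paper's ``unit plus nilpotent'' argument is shorter; your version has the side benefit of making explicit the strictly increasing chain of annihilators of the powers of $x$, which is the structural fact exploited elsewhere in the staircase-labeling argument (Proposition~\ref{prop:staircase--labeling}). One small point to keep tidy: your identification of $N([v])$ with the set of classes of elements of $\ann(v)$ relies on edges of $\varTheta(K)$ being well defined on associatedness classes, which the paper notes after Definition~\ref{def:theta}; with that in hand every step of your chain argument goes through.
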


\begin{proof}
Suppose otherwise, that $\left[x^{k}\right]=\left[x^{l}\right]$.
Then $x^{k}=x^{l}u$, where $u$ is a unit, so that $x^{k}\left(1-x^{l-k}u\right)=0$.
Since $l-k\geq1$ and $x$ is nilpotent, $1-x^{l-k}u$ is a unit.
But then $x^{k}=0$, a contradiction.
\end{proof}

We can now shows that the vertices of a zero-divisor graph which is isomorphic to a staircase graph can be labeled by powers of a single element of the ring.

\begin{proposition}\label{prop:staircase--labeling}
Let $K$ be a local ring with maximal
ideal $\mathfrak{m}$, such that $\varTheta\left(K\right)\cong SG_{n}$.
Denote representatives of associatedness classes in such a way that $N\left(\left[a_{0}\right]\right)\varsubsetneq\ldots\varsubsetneq N\left(\left[a_{n-1}\right]\right)\varsubsetneq N\left(\left[a_{n}\right]\right)$.
Then $\left[a_{i}\right]=\left[a_{1}^{i}\right]$ for all $i\in\left\{ 0,\ldots,n\right\} $.
\end{proposition}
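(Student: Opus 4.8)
The statement to prove is Proposition~\ref{prop:staircase--labeling}: in a local ring $K$ with $\varTheta(K)\cong SG_n$, after ordering the classes so that $N([a_0])\varsubsetneq\cdots\varsubsetneq N([a_n])$, we have $[a_i]=[a_1^i]$ for all $i$.

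The plan is to argue by induction on $i$, using the two preceding lemmas as the main tools. The base cases $i=0$ and $i=1$ are immediate: $[a_0]=[1]$ is the unique vertex of degree $1$ (its only neighbour is $[0]=[a_n]$), so $a_0$ is a unit and $[a_0]=[a_1^0]$; and $[a_1]=[a_1^1]$ trivially. For the inductive step, assume $[a_j]=[a_1^j]$ for all $j<i$, and I would try to identify the vertex $[a_1^i]$ among the $a_j$ by pinning down its neighbourhood. First I would apply Lemma~\ref{lem:local--different--powers} with $x=a_1$: writing $m$ for the nilpotency index of $a_1$, the powers $[a_1^0],[a_1^1],\ldots,[a_1^m]$ are pairwise distinct, so they account for $m+1$ distinct vertices of $SG_n$, forcing $m=n$ (since $SG_n$ has exactly $n+1$ vertices) and hence $a_1^n=0$ while $a_1^{n-1}\neq 0$. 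Thus $a_1^i$ is a genuine vertex for every $i\in\{0,\ldots,n\}$, and the map $i\mapsto[a_1^i]$ is an injection into an $(n+1)$-element set, hence a bijection onto $V(\varTheta(K))$.

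The crux is then to show that this bijection respects the degree-ordering, i.e.\ that $[a_1^i]$ is the vertex $[a_i]$ of degree $i+1$, equivalently that $N([a_1^0])\varsubsetneq N([a_1^1])\varsubsetneq\cdots\varsubsetneq N([a_1^n])$. Since the neighbourhoods in a staircase graph form a strict chain by Proposition~\ref{lem:staircase--properties}\ref{enu:staircase--chain--neighbourhoods}, and the $[a_1^i]$ exhaust the vertices, it suffices to establish the inclusions $N([a_1^i])\subseteq N([a_1^{i+1}])$; strictness and the correct indexing then follow automatically, because a strict chain of $n+1$ distinct neighbourhoods, each contained in the next, must be exactly the staircase chain in order. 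To get $N([a_1^i])\subseteq N([a_1^{i+1}])$ I would invoke Lemma~\ref{prop:local--distributive}\ref{enu:local1}: taking $y=a_1\in\mathfrak{m}$ and using that the powers are nested as $N([a_1^i])\subseteq N([a_1^{i+1}])$ follows from $\ann(a_1^i)\subseteq\ann(a_1^{i+1})$, which is clear since $a_1^{i+1}=a_1\cdot a_1^i$.

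The step I expect to be the main obstacle is concluding that the injection $i\mapsto[a_1^i]$ is not merely a bijection but an \emph{order-preserving} one, i.e.\ that $[a_1^i]$ lands precisely on $[a_i]$ rather than on some permuted vertex. Once the chain of inclusions $N([a_1^0])\subseteq\cdots\subseteq N([a_1^n])$ is in hand, I would argue that these $n+1$ neighbourhoods are pairwise distinct (the vertices are distinct and $SG_n$ has no two vertices with equal neighbourhoods), so the inclusions are all strict; matching this strict chain of length $n+1$ against the unique strict chain of neighbourhoods in $SG_n$ guaranteed by Proposition~\ref{lem:staircase--properties} forces $N([a_1^i])=N([a_i])$, whence $[a_1^i]=[a_i]$ by distinctness of neighbourhoods. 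The delicate bookkeeping is ensuring the strictness, for which Lemma~\ref{prop:local--distributive}\ref{enu:local2} is the right device: if some inclusion $N([a_1^i])\subseteq N([a_1^{i+1}])$ were an equality it would force $a_1^{i}=a_1^{i+1}=0$, contradicting $a_1^{n-1}\neq 0$ when $i+1\leq n-1$, and the top of the chain is handled directly since $[a_1^n]=[0]$ has the full neighbourhood.
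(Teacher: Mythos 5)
Your outline founders at the step you rely on most: the claim that the nilpotency index $m$ of $a_1$ equals $n$. Lemma~\ref{lem:local--different--powers} tells you that $\left[a_1^0\right],\left[a_1^1\right],\ldots,\left[a_1^m\right]$ are pairwise distinct vertices, and since $SG_n$ has $n+1$ vertices this gives only $m+1\leq n+1$, i.e.\ $m\leq n$; it does not ``force $m=n$''. A priori the distinct powers of $a_1$ could occupy only some of the vertices, with the remaining classes represented by elements that are not associates of any power of $a_1$ --- ruling this out is precisely the content of the proposition, so assuming $a_1^{n-1}\neq 0$ and that $i\mapsto\left[a_1^i\right]$ is a bijection onto $V(\varTheta(K))$ is circular. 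Everything downstream (the strictness of the chain, the order-preserving matching against the staircase chain, the use of $a_1^{n-1}\neq 0$ to rule out equalities) depends on this unestablished bijectivity. The parts that do work --- the nesting $N\left(\left[a_1^i\right]\right)\subseteq N\left(\left[a_1^{i+1}\right]\right)$ from $\ann\left(a_1^i\right)\subseteq\ann\left(a_1^{i+1}\right)$, and the observation that distinct vertices of $SG_n$ have distinct neighbourhoods --- are correct but do not close the gap.

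The paper avoids this by doing genuine work in the inductive step. Assuming $\left[a_{i-1}\right]=\left[a_1^{i-1}\right]$, it considers the products $a_{i-1}a_1,a_{i-1}a_2,\ldots,a_{i-1}a_{n-i+1}$; the staircase structure says exactly the last of these is $0$, and Lemma~\ref{prop:local--distributive} (both parts, with $y=a_{i-1}$) then yields a \emph{strict} chain of $n-i+1$ neighbourhoods ending at $N\left(\left[0\right]\right)$. Being a subchain of the full staircase chain of length $n+1$, its smallest term satisfies $N\left(\left[a_{i-1}a_1\right]\right)=N\left(\left[a_1^i\right]\right)\subseteq N\left(\left[a_i\right]\right)$, so $\left[a_1^i\right]=\left[a_j\right]$ for some $j\leq i$; Lemma~\ref{lem:local--different--powers} then excludes $j<i$. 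Note that this argument \emph{derives} $a_1^{i-1}\neq 0$ along the way rather than assuming it. If you want to rescue your approach, you must supply an independent proof that the powers of $a_1$ exhaust all $n+1$ vertices, and the counting argument you give cannot do that.
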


\begin{proof}
We prove the claim by induction on $i$. Clearly, $\left[a_{0}\right]=\left[1\right]$, so the claim is true for $i=0$ and also for $i=1$.

Let $i\in\left\{ 2,\ldots,n\right\} $. We examine the products
$a_{i-1}a_{1}$, $a_{i-1}a_{2}$, $a_{i-1}a_{3}$, $\ldots$, $a_{i-1}a_{n-i}$, $a_{i-1}a_{n-i+1}$.
By Proposition~\ref{lem:staircase--properties} we have
\[N\left(\left[a_{i-1}\right]\right)=\left\{ \left[a_{n-i+1}\right],\left[a_{n-i+2}\right],\ldots,\left[a_{n}\right]\right\},\]
therefore $a_{i-1}a_{1}\neq0$, $a_{i-1}a_{2}\neq0,\ldots$, $a_{i-1}a_{n-i}\neq0$ and $a_{i-1}a_{n-i+1}=0$.
Hence, by Lemma~\ref{prop:local--distributive}, 
\[
N\left(\left[a_{i-1}a_{1}\right]\right)\varsubsetneq N\left(\left[a_{i-1}a_{2}\right]\right)\varsubsetneq\ldots\varsubsetneq N\left(\left[a_{i-1}a_{n-i}\right]\right)\varsubsetneq N\left(\left[a_{i-1}a_{n-i+1}\right]\right).
\]
Since this is a subchain of the chain $N\left(\left[a_{0}\right]\right)\varsubsetneq\ldots\varsubsetneq N\left(\left[a_{n-1}\right]\right)\varsubsetneq N\left(\left[a_{n}\right]\right)$,
we conclude that $N\left(\left[a_{i-1}a_{1}\right]\right)\subseteq N\left(\left[a_{i}\right]\right)$.
By induction, $[a_{i-1}]=[a_1^{i-1}]$, which implies $[a_{i-1}a_1]=[a_1^{i-1}a_1]=[a_1^i]$. Thus, $N\left(\left[a_{1}^{i}\right]\right)\subseteq N\left(\left[a_{i}\right]\right)$,
so there exists $j\leq i$ such that $\left[a_{1}^{i}\right]=\left[a_{j}\right]$.
If $j<i$, then by induction $\left[a_{1}^{i}\right]=[a_{1}^{j}]$,
which contradicts Lemma~\ref{lem:local--different--powers} unless
$a_1^{i-1}=0$. But the latter would imply $\left[a_{i-1}\right]=\left[a_{1}^{i-1}\right]=\left[0\right]=\left[a_{n}\right]$ and consequently $i=n+1$ which is not the case.
\end{proof}

Recall that a \emph{principal ideal ring}, abbreviated PIR, is a commutative unital ring in which every ideal is principal. Being a PIR is another property that can be characterized by the structure of $\varTheta(K)$.

\begin{theorem}\label{thm:pir-iff}
A finite commutative unital ring $K$ is a PIR if and only if $\varTheta\left(K\right)$ is isomorphic
to a finite tensor product of staircase graphs.
\end{theorem}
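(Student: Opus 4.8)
The plan is to reduce both directions to local rings, using that $\varTheta$ preserves products (Proposition~\ref{prop:preserves--products}), that a product graph forces a product ring (Theorem~\ref{prop:preproduct}), and the labelling of Proposition~\ref{prop:staircase--labeling}.

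\textbf{Forward direction.} Suppose $K$ is a PIR. Since $K$ is finite, write $K\cong K_{1}\times\cdots\times K_{r}$ with each $K_{i}$ local. Each $K_{i}$, being a homomorphic image of $K$ under a coordinate projection, is again a PIR, so its maximal ideal is principal, $\mathfrak{m}_{i}=(t_{i})$; as $K_{i}$ is finite, $\mathfrak{m}_{i}$ is nilpotent, say $\mathfrak{m}_{i}^{n_{i}}=0\neq\mathfrak{m}_{i}^{n_{i}-1}$. I would then check that every nonzero element of $K_{i}$ has the form $t_{i}^{j}u$ with $u$ a unit and $0\le j\le n_{i}-1$: taking $j$ maximal with $x\in(t_{i}^{j})$ forces the cofactor to be a unit, and nilpotency guarantees $j<n_{i}$. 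Hence the associatedness classes of $K_{i}$ are exactly $[t_{i}^{0}],\ldots,[t_{i}^{n_{i}}]$, with $[t_{i}^{j}]$ adjacent to $[t_{i}^{k}]$ iff $t_{i}^{j+k}=0$ iff $j+k\ge n_{i}$; since the degree of $[t_{i}^{j}]$ is $j+1$, applying Proposition~\ref{lem:staircase--properties} gives $\varTheta(K_{i})\cong SG_{n_{i}}$. Preservation of products then yields $\varTheta(K)\cong\prod_{i}SG_{n_{i}}$.

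\textbf{Backward direction.} Suppose $\varTheta(K)\cong\prod_{i=1}^{r}SG_{n_{i}}$. Each staircase factor is realized by a ring, $SG_{n_{i}}\cong\varTheta(\mathbb{Z}_{p^{n_{i}}})$ for any prime $p$, so I would apply Theorem~\ref{prop:preproduct} repeatedly, peeling off one factor at a time, to obtain $K\cong K_{1}\times\cdots\times K_{r}$ with $\varTheta(K_{i})\cong SG_{n_{i}}$. By Corollary~\ref{cor:staircase--local} each $K_{i}$ is local. Because a finite product of PIRs is a PIR (ideals of a product split as products of ideals, and a tuple of generators is a single generator), it suffices to prove that each local $K_{i}$ is a PIR.

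For this I would invoke Proposition~\ref{prop:staircase--labeling}: ordering representatives so that $N([a_{0}])\varsubsetneq\cdots\varsubsetneq N([a_{n_{i}}])$, one has $[a_{j}]=[a_{1}^{j}]$ for all $j$, with $[a_{0}]=[1]$ and $[a_{n_{i}}]=[0]$. Thus every nonzero element of $K_{i}$ is $a_{1}^{j}u$ for a unit $u$ and a unique $0\le j\le n_{i}-1$, while $a_{1}^{n_{i}}=0$. Given a nonzero ideal $I$, let $j_{0}$ be the least exponent occurring among the nonzero elements of $I$ (well defined by Lemma~\ref{lem:local--different--powers}); multiplying out the unit shows $a_{1}^{j_{0}}\in I$, and every nonzero element of $I$ lies in $(a_{1}^{j_{0}})$, so $I=(a_{1}^{j_{0}})$ is principal. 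Hence $K_{i}$, and therefore $K$, is a PIR.

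The essential content sits in the two results imported here: Theorem~\ref{prop:preproduct}, which converts the graph decomposition into a ring decomposition, and Proposition~\ref{prop:staircase--labeling}, which exhibits the maximal ideal of a local factor as principal; once these are available the theorem is mainly assembly. The one delicate point is the inductive use of Theorem~\ref{prop:preproduct}, where at each stage both factor graphs must be realized by actual rings — the staircase factors via $\mathbb{Z}_{p^{n_{i}}}$ — so that the hypotheses of that theorem are met.
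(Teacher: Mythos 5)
Your proof is correct, and while the backward direction follows the paper's argument almost verbatim (peel off staircase factors with Theorem~\ref{prop:preproduct}, invoke Corollary~\ref{cor:staircase--local} and Proposition~\ref{prop:staircase--labeling}, and read off that every ideal is generated by the least power of $a_1$ it contains), your forward direction takes a genuinely different route. The paper quotes Hungerford's structure theorem to write $K$ as a product of homomorphic images of PIDs and then applies the Chinese Remainder Theorem to reduce to quotients $K_i/(p_j^{\alpha_j})$; you instead start from the standard decomposition of a finite commutative ring into local factors, observe that each local factor is again a PIR with principal nilpotent maximal ideal $(t_i)$, and prove directly that every nonzero element is $t_i^{j}u$ with $u$ a unit (your maximality argument for $j$ is sound, since $(t_i)^{n_i}=0$ bounds $j$ and non-unit cofactors can be absorbed into a higher power). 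This makes your forward direction more elementary and self-contained -- it avoids the external citation entirely and runs parallel to the backward direction -- at the cost of redoing by hand the classification of elements in a finite local PIR that Hungerford's theorem packages for free. You also correctly flag the one delicate point in the induction, namely that at each application of Theorem~\ref{prop:preproduct} both graph factors must be realized by actual rings; the complementary factor $\prod_{i\ge 2}SG_{n_i}$ is realized by $\prod_{i\ge 2}\mathbb{Z}_{p_i^{n_i}}$ via Proposition~\ref{prop:preserves--products} (or by $\mathbb{Z}_{p_2^{n_2}\cdots p_r^{n_r}}$ via Proposition~\ref{prop:only--exponents--matter}, which is what the paper uses), so the hypotheses are indeed met.
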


\begin{proof}
Suppose $\varTheta\left(K\right)\cong\prod_{i=1}^{n}SG_{k_{i}}\cong SG_{k_{1}}\times\prod_{i=2}^{n}SG_{k_{i}}$.
By Proposition~\ref{prop:only--exponents--matter}, we have $\varTheta\left(K\right)\cong\varTheta\left(\mathbb{Z}_{p_{1}^{k_{1}}}\right)\times\varTheta\left(\mathbb{Z}_{p_{2}^{k_{2}}\cdots p_{n}^{k_{n}}}\right)$
for some distinct primes $p_{1},p_{2},\ldots,p_{n}$. Theorem
\ref{prop:preproduct} implies $K\cong K_{1}\times K_{1}'$, where
$\varTheta\left(K_{1}\right)\cong SG_{k_{1}}$ and $\varTheta\left(K_{1}'\right)\cong\prod_{i=2}^{n}SG_{k_{i}}$.
By induction, $K\cong\prod_{i=1}^{n}K_{i}$, where $\varTheta\left(K_{i}\right)\cong SG_{k_{i}}$.
Since the direct product of PIR's is a PIR, it suffices to prove that
each $K_{i}$ is a PIR. By Corollary~\ref{cor:staircase--local},
$K_{i}$ is a local ring. Denote its maximal ideal by $\mathfrak{m}_{i}$.
Then by Proposition~\ref{prop:staircase--labeling}, there exists $x\in K_{i}$
such that $\left[x^{0}\right],\left[x^{1}\right],\left[x^{2}\right],\ldots,\left[x^{k_{i}}\right]$
are all of the vertices of $\varTheta\left(K_{i}\right)$. This clearly
implies that every ideal of $K_{i}$ is principal, generated by the
least power of $x$ it contains.

Conversely, suppose $K$ is a PIR. Then by a result of Hungerford \cite[Theorem~1]{Hun}, $K$ is a finite direct product of homomorphic images
of PID's, say $K\cong\prod_{i=1}^{n}K_{i}/I_{i}$, where $K_{i}$
is a PID (not necessarily finite) and $I_{i}\triangleleft K_{i}$
for all $i\in\left\{ 1,2,\ldots,n\right\} $. By Proposition~\ref{prop:preserves--products},
it suffices to prove that each $\varTheta\left(K_{i}/I_{i}\right)$
is a tensor product of staircase graphs. If $I_{i}=0$, then $K_{i}$
has to be finite and every finite PID is a field. In this case, $\varTheta\left(K_{i}/I_{i}\right)$
is isomorphic to either $SG_{1}$ or $SG_{0}$. If $I_{i}=K_{i}$,
then $\varTheta\left(K_{i}/I_{i}\right)$$\cong SG_{0}$. Now, assume
$0\neq I_{i}\neq K_{i}$. Then, $I_{i}$ is generated by some $a_{i}=u\cdot p_{1}^{\alpha_{1}}p_{2}^{\alpha_{2}}\cdots p_{m}^{\alpha_{m}}$,
where $m\geq1$, $\alpha_{1}\geq1$, $p_{j}$ are prime elements and $u$
is a unit in $K_{i}$. By the Chinese Remainder Theorem, $K_{i}/I_{i}\cong\prod_{j=1}^{m}K_{i}/(p_{j}^{\alpha_{j}})$,
hence it suffices to prove that $\varTheta\left(K_{i}/(p_{j}^{\alpha_{j}})\right)\cong SG_{\alpha_{j}}.$
This is easily shown upon observing that every element in $K_{i}/(p_{j}^{\alpha_{j}})$
is a product of some power of $p_{j}$ and some unit.
\end{proof}

The following corollary easily follows from the proof of Theorem~\ref{thm:pir-iff}.

\begin{corollary}\label{cor:local_PIR}
Let $K$ be a finite commutative unital ring. Then $K$ is a local PIR if and only if $\varTheta\left(K\right)\cong SG_{n}$ for some nonnegative integer $n$. In fact, $n$ is the index of nilpotency of the maximal ideal of $K$.
\end{corollary}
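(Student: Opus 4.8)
The plan is to deduce both directions from results already established, chiefly Theorem~\ref{thm:pir-iff}, Corollary~\ref{cor:staircase--local}, and Proposition~\ref{prop:staircase--labeling}. First I would prove the forward direction. Suppose $K$ is a local PIR. By Theorem~\ref{thm:pir-iff}, $\varTheta(K)$ is isomorphic to a finite tensor product $\prod_{i=1}^{r} SG_{k_i}$ of staircase graphs. The key observation is that locality forces $r=1$: by Theorem~\ref{prop:preproduct}, a nontrivial product decomposition $\varTheta(K)\cong\varTheta(L_1)\times\varTheta(L_2)$ with both factors distinct from $\varTheta(0)$ would yield a nontrivial ring decomposition $K=K_1\times K_2$, contradicting that $K$ is local (equivalently, directly indecomposable). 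Since each $SG_{k_i}$ with $k_i\geq 1$ is distinct from $\varTheta(0)\cong SG_0$, at most one factor can be nontrivial, and the trivial factors $SG_0$ act as the final object and can be absorbed. Hence $\varTheta(K)\cong SG_n$ for a single nonnegative integer $n$.

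For the converse, suppose $\varTheta(K)\cong SG_n$. By Corollary~\ref{cor:staircase--local}, $K$ is local. Since $SG_n$ is in particular a (one-term) tensor product of staircase graphs, Theorem~\ref{thm:pir-iff} immediately gives that $K$ is a PIR. Thus $K$ is a local PIR, completing the equivalence. Indeed, one can even appeal directly to Proposition~\ref{prop:staircase--labeling}, which supplies an element $x$ whose powers $[x^0],[x^1],\ldots,[x^n]$ exhaust the vertices of $\varTheta(K)$, so every ideal is generated by the least power of $x$ it contains, reproving the PIR property explicitly.

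It remains to verify the final assertion that $n$ equals the index of nilpotency of the maximal ideal $\mathfrak{m}$ of $K$. Here I would use the labeling from Proposition~\ref{prop:staircase--labeling}: the vertices of $\varTheta(K)\cong SG_n$ are exactly $[x^0]=[1],[x^1],\ldots,[x^n]$, and by Lemma~\ref{lem:local--different--powers} these are pairwise distinct associatedness classes. The top vertex $[x^n]$ has degree $n+1$, and it is the unique vertex adjacent to every vertex including itself, so $[x^n]=[0]$, giving $x^n=0$. On the other hand $[x^{n-1}]\neq[0]$, so $x^{n-1}\neq 0$. Since every non-unit of $K$ lies in $\mathfrak{m}$ and $x$ generates $\mathfrak{m}$ up to associatedness (every element of $\mathfrak{m}$ is associated to some $x^k$ with $k\geq 1$), we get $\mathfrak{m}^n=0$ while $\mathfrak{m}^{n-1}\neq 0$, i.e. $n$ is precisely the index of nilpotency of $\mathfrak{m}$.

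The main obstacle is less a deep difficulty than a bookkeeping one: in the forward direction I must argue carefully that locality collapses the tensor product to a single factor, correctly handling the trivial $SG_0$ factors via the final-object property of $\varTheta(0)$, rather than naively assuming the product had only nontrivial terms. The nilpotency-index identification is then a direct translation of the staircase labeling into ring-theoretic language.
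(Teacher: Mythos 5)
Your proposal is correct and follows essentially the route the paper intends: the paper dismisses the proof with ``easily follows from the proof of Theorem~\ref{thm:pir-iff}'', and you supply exactly those details --- Theorem~\ref{thm:pir-iff} plus Corollary~\ref{cor:staircase--local} for the equivalence, Theorem~\ref{prop:preproduct} to collapse the tensor product to a single staircase factor under locality, and Proposition~\ref{prop:staircase--labeling} with $\mathfrak{m}=(x)$, $x^n=0$, $x^{n-1}\neq 0$ for the nilpotency index. No gaps.
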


We remark that for a fixed positive integer $n$ there exist many non-isomorphic local PIR's with $\varTheta\left(K\right)\cong SG_{n}$.
For example, the rings $\mathbb{Z}_{16}$, $\mathbb{Z}_2[x]/(x^4)$, $\mathbb{Z}_4[x]/(x^2-2)$ and $\mathbb{Z}_4[x]/(x^2-2x-2)$ all have the compressed zero-divisor graph $\varTheta\left(K\right)$ isomorphic to $SG_3$ and, in addition, they all have the residue field isomorphic to $\mathbb{Z}_2$.
Moreover, they have the corresponding associatedness classes of the same sizes, which means that they also have the same non-compressed zero-divisor graphs $\Gamma(K)$. All the above can be verified by hand and we leave the verification to the reader.

Corollary~\ref{cor:local_PIR} shows that for a finite local PIR $K$ the index of nilpotency of its maximal ideal can be extracted from the structure of $\varTheta(K)$. We were not able to establish whether the same holds for any finite local ring so we leave it as an open question.

\begin{question}
Let $K$ be a finite local unital ring with maximal ideal $\mathfrak{m}$.
\begin{enumerate}
\item[(a)] Does the graph structure of $\varTheta(K)$ determine the index of nilpotency of $\mathfrak{m}$?
\item[(b)] Does the graph structure of $\varTheta(K)$ determine the minimal number of generators of $\mathfrak{m}$?
\end{enumerate}
\end{question}

\section{Infinite rings}

Finally, we remark that the definition of graph $\varTheta(K)$ can be extended to infinite commutative unital rings, however a verbatim extension is not the best way to do so.
In view of the proof of Proposition~\ref{prop:best}, we believe that the right way to extend the definition is to compress the zero-divisor graph by the relation $\approx$, defined by $a \approx b$ if and only if $aK=bK$, and define edges in a similar way as in the finite case.
By this definition, the equivalence classes are in a bijective correspondence $[a]_\approx \leftrightarrow aK$ with the principal ideals of $K$, and two classes are connected by an edge if and only if the product of the corresponding principal ideals is $0$. Hence, we propose the following extension of Definition~\ref{def:theta}.

\begin{definition}\label{def:extension}
For an arbitrary commutative unital ring $K$, $\varTheta\left(K\right)$
is a graph whose vertices are principal ideals of $K$ (including $0$ and $K$) and vertices $I$ and $J$ (not necessarily
distinct) are adjacent if and only if $IJ=0$.
\end{definition}

As remarked by Kaplansky in \cite[\S 2]{Kap}, for any artinian commutative unital ring $K$, the equality $aK=bK$ holds if and only if $a \sim b$. Hence, for any artinian commutative unital ring, and in particular for any finite commutative unital ring, Definition~\ref{def:extension} is equivalent to Definition~\ref{def:theta}.

\end{document}